\numberwithin{equation}{section}
\title{The fast scalar auxiliary variable approach with unconditional energy stability
for nonlocal Cahn-Hilliard equation
        \thanks{
We would like to acknowledge the assistance of volunteers in putting together this example manuscript and supplement. This work was supported in part by the National Natural Science Foundation of China under Grants 91630207, 11471194 and 11571115, by the National Science Foundation under Grant DMS-1216923, by the OSD/ARO MURI Grant W911NF-15-1-0562, and by Taishan Scholars Program of Shandong Province of China. The authors Z. Liu and X. Li thank for the financial support from China Scholarship Council.}}
      \author{Zhengguang Liu
             \thanks{School of Mathematics and Statistics, Shandong Normal University, Jinan, China. Second address: School of Mathematics, Shandong University, Jinan 250100, China Email: liuzhgsdu@yahoo.com}.
             \and
             Aijie Cheng
             \thanks{Corresponding Author. School of Mathematics, Shandong University, Jinan 250100, China. Email: aijie@sdu.edu.cn}.
             \and
             Xiaoli Li
             \thanks{Fujian Provincial Key Laboratory on Mathematical Modeling and High Performance Scientific Computing and School of Mathematical Sciences,Xiamen University, Xiamen, Fujian, 361005, China. Email: xiaolisdu@163.com}. }
\begin{document}

\maketitle

\begin{abstract}
Comparing with the classical local gradient flow and phase field models, the nonlocal models such as nonlocal Cahn-Hilliard equations equipped with nonlocal diffusion operator can describe more practical phenomena for modeling phase transitions. In this paper, we construct an accurate and efficient scalar auxiliary variable approach for the nonlocal Cahn-Hilliard equation with general nonlinear potential. The first contribution is that we have proved the unconditional energy stability for nonlocal Cahn-Hilliard model and its semi-discrete schemes carefully and rigorously. Secondly, what we need to focus on is that the non-locality of the nonlocal diffusion term will lead the stiffness matrix to be almost full matrix which generates huge computational work and memory requirement. For spatial discretizaion by finite difference method, we find that the discretizaition for nonlocal operator will lead to a block-Toeplitz-Toeplitz-block (BTTB) matrix by applying four transformation operators. Based on this special structure, we present a fast procedure to reduce the computational work and memory requirement. Finally, several numerical simulations are demonstrated to verify the accuracy and efficiency of our proposed schemes.
\end{abstract}

\begin{keywords}
Nonlocal Cahn-Hilliard equation, scalar auxiliary variable, unconditional energy stability, BTTB matrix, fast procedure, numerical simulations.
\end{keywords}

    \begin{AMS}
         26A33, 35K20, 35K25, 35K55, 65M12, 65Z05.
    \end{AMS}

\pagestyle{myheadings}
\thispagestyle{plain}
\markboth{ZHENGGUANG LIU, AIJIE CHENG AND XIAOLI LI} {FAST SCALAR AUXILIARY VARIABLE APPROACH}
  \section{Introduction}
Phase field models have been extensively applied to study the dynamics of different material phases via order parameters, such as phase transformations in binary alloys  \cite{hu2001phase,jou1997microstructural}; epitaxial thin film growth \cite{torabi2009new,wise2005quantum}; crystal faceting \cite{bollada2018faceted}; multi-phase fluid flow \cite{badalassi2003computation,chen2016efficient}. They can describe the evolution of complex, morphology-changing surfaces and construct a general framework to take the consideration of more physical effects.

There exist many advantages in the phase field models from the mathematical point of view. Specially, since the phase field models are usually energy stable (thermodynamics-consistent) and well-posed, which is based on the energy variational approach, it is possible to perform effective numerical analysis and carry out reliable and accurate computer simulations. The significant goal is to preserve the energy stable property at the discrete level irrespectively of the coarseness of the discretization in time and space. Schemes with this property is extremely preferred for solving diffusive systems due to the fact that it is not only critical for the numerical scheme to capture the correct long time dynamics of the system, but also supplies sufficient flexibility for dealing with the stiffness issue. Moreover, the noncompliance of energy dissipation laws may lead to spurious numerical approximations if the mesh or time step sizes are not controlled carefully. However, due to the thin interface, it is a quite difficult issue to construct unconditionally energy stable schemes for phase field models such as the Cahn-Hilliard and Allen-Cahn equations with general nonlinear potential. Many efforts had been done in order to solve this problem (c.f. \cite{chen2015decoupled,he2007large,liu2018time,shen2012second,shen2018scalar,shen2010numerical,zhao2016numerical}).

Recently, nonlocal phase filed models such as nonlocal Cahn-Hilliard equation have attracted more and more attentions and been used in many fields involving physics, materials science, finance and image processing \cite{chen2018accurate,chen2018power,du2018stabilized}. Many important phenomena are well described by nonlocal model with nonlocal diffusion term. Bates and Han \cite{bates2005dirichlet,bates2005neumann} investigated the well-posedness of the nonlocal Cahn-Hilliard equations equipped with Dirichlet or Neumann boundary condition. Stabilized linear semi-implicit schemes for the nonlocal Cahn-Hilliard equation were considered by Du et al. and the energy stabilities were established for two methods in the fully discrete sense \cite{du2018stabilized}. Guan et al. \cite{guan2014second} presented second-order accurate, unconditionally uniquely solvable and unconditionally energy stable schemes for the nonlocal Cahn-Hilliard and nonlocal Allen-Cahn equations for a large class of interaction kernels. Some other relative models such as nonlocal Cahn-Hilliard-Navier-Stokes \cite{frigeri2016nonlocal,frigeri2012global,frigeri2013strong} have also been considered and analyzed by many researchers.

The classical Cahn-Hilliard equation is a nonlinear, fourth order in space, parabolic partial differential equation which is often used as a diffuse interface model for the phase separation of a binary alloy:
\begin{equation*}
\frac{\partial \phi}{\partial t}+M(-\Delta)(-\epsilon^2\Delta \phi+F'(\phi))=0, \quad(\textbf{x},t)\in\Omega\times Q,
\end{equation*}
where $Q=(0,T]$, $M$ is the mobility constant, the interface width satisfies $O(\epsilon)$, which is small compared to the characteristic length of the laboratory scale. The phase-field $\phi$ represents the difference of local relative concentrations for the two components of the mixture such that $\phi=\pm1$ correspond to the pure phases of the material while $\phi \in (-1,1)$ corresponds to the transition between the two phases. $\textbf{x} \in \Omega \subseteq \mathbb{R}^d$. $F(\phi)$ is the nonlinear bulk potential and the most commonly used form Ginzburg-Landau double-well type potential is defined as follows  \cite{liu2003phase,minjeaud2013unconditionally,yang2017numerical}:
\begin{equation*}
F(y)=\frac{1}{4}(y^2-1)^2,~y\in(-\infty, \infty),
\end{equation*}
The free energy takes the form:
\begin{equation*}
E(\phi)=\int_{\Omega}(\frac{1}{2}\epsilon^2|\nabla \phi|^2+F(\phi))d\textbf{x}.
\end{equation*}

By replacing the Laplacian in the above local energy by the nonlocal diffusion operator $\mathcal{L}$ in \cite{du2012analysis}, one can obtain the nonlocal free energy functional as follows \cite{bates2009numerical,bates2006nonlocal,du2018stabilized}:
\begin{equation}\label{section1_e_energy functional}
E(\phi)=\int_{\Omega}\left(F(\phi)+\frac{\epsilon^2}{4}\int_\Omega J(\textbf{x}-\textbf{y})\left[\phi(\textbf{x})-\phi(\textbf{y})\right]^2d\textbf{y}\right)d\textbf{x},
\end{equation}
where the kernel $J$ satisfies the following conditions:

(\romannumeral1) $J(\textbf{x})\geq0$, $\forall \textbf{x}\in\Omega$;

(\romannumeral2) $J(\textbf{x}-\textbf{y})=J(\textbf{y}-\textbf{x})$;

(\romannumeral3) $J$ is $\Omega$-periodic.

In addition, we can also replace the first Laplician in classical Cahn-Hilliard model by another nonlocal operator $\mathcal{L}$ to obtain a general nonlocal Cahn-Hilliard model:
\begin{equation*}
\frac{\partial \phi}{\partial t}+M\mathcal{L}(-\epsilon^2\mathcal{L}\phi+F'(\phi))=0, \quad(\textbf{x},t)\in\Omega\times J,
\end{equation*}
where the nonlocal operator $\mathcal{L}$ on the function $u(\textbf{x})$ has been introduced by many articles such as \cite{du2012analysis,du2013nonlocal}
\begin{equation}\label{section1_nonlocal_operator}
\mathcal{L}u(\textbf{x})=\int_{\Omega}J(\textbf{x}-\textbf{y})[u(\textbf{x})-u(\textbf{y})]d\textbf{y},\quad \forall \textbf{x}\in\Omega\subseteq\mathbb{R}^n.
\end{equation}

The main goal of this paper is to construct accurate and efficient linear algorithms for the general nonlocal Cahn-Hilliard equation with general nonlinear potential and prove the unconditional energy stability for its semi-discrete schemes carefully and rigorously. In addition, considering the huge computational work and memory requirement in solving the linear system, we analyze the structure of the stiffness matrix and seek some effective fast solution method to reduce the computational work and memory requirement. By applying four transformation operators $\mathcal{A}_1$, $\mathcal{A}_2$, $\mathcal{A}_3$, $\mathcal{A}_4$, we transform the stiffness matrix into a block-Toeplitz-Toeplitz-block (BTTB) matrix. Then, a fast solution technique which is based on a fast Fourier transform is presented to solve a new linear system with BTTB stiffness matrix. The overall computational cost of the fast conjugate gradient method is $O(N$log$^2N)$, since the number of iterations is $O($log$N)$ where $N$ is the number of unknowns. What we need to focus is that if one uses the Gaussian elimination method straightforwardly to this linear system, then it requires $O(N^3)$ complexity. In addition, since $N\times N$ BTTB matrix is determined by only $2N-1$ entries rather than $N^2$ entries, the fast solver will make memory requirement $O(N)$ instead of $O(N^{2})$. Finally, various 2D numerical simulations are demonstrated to verify the accuracy and efficiency of our proposed schemes.

The paper is organized as follows. In Sect.2, we provide the nonlocal Cahn-Hilliard model with general nonlinear potential by energy variational approach and present some notations. In Sect.3, the linear, first and second order numerical scalar auxiliary variable approaches to construct unconditionally energy stable schemes for the nonlocal Cahn-Hilliard model are considered. In Sect.4, we analyse the structure of the stiffness matrix and seek some effective fast solution methods to reduce the computational work and memory requirement. In Sect.5, various 2D numerical simulations are demonstrated to verify the accuracy and efficiency of our proposed schemes.

Throughout the paper, we use $C$, with or without subscript, to denote a positive constant, which could have different values at different appearances.

\section{The nonlocal Cahn-Hilliard model and relative notations}
In this section, we introduce the nonlocal Cahn-Hilliard model with general nonlinear potential by energy variational approach and present some notations which will be used in the later analysis.

First, the inner product and norm of $L^2(\Omega)$ are defined by
\begin{equation}\label{section1_e_inner product}
(u,v)=\int_{\Omega}uvd\textbf{x},\quad \|u\|_{2}=(u,u)^{\frac{1}{2}},\quad \forall\ u,\ v\in L^2(\Omega).
\end{equation}

It is easy to obtain that
\begin{equation}\label{section1_nonlocal_inner}
\left(\mathcal{L}u(\textbf{x}),u(\textbf{x})\right)=2\int_{\Omega}\int_{\Omega}J(\textbf{x}-\textbf{y})[u(\textbf{x})-u(\textbf{y})]^2d\textbf{y}d\textbf{x}\geq0.
\end{equation}

Next, we give a brief introduction about how the nonlocal Cahn-Hilliard model is resulted from the energetic variation of the energy functional (\ref{section1_e_energy functional}). Denoting its variational derivative as $\mu=\frac{\delta E}{\delta \phi}$, the general form of the gradient flow model can be written as \cite{shen2017new}
\begin{equation}\label{section1_gradient_flow}
\frac{\partial \phi}{\partial t}=M\mathcal{G}\mu, \quad(\textbf{x},t)\in\Omega\times Q,
\end{equation}
where $Q=(0,T]$, $M$ is the mobility constant, $\mu$ is the chemical potential, and $f(\phi)=F^{\prime}(\phi)$. The initial condition is $\phi|_{t=0}=\phi_0$. The equation (\ref{section1_gradient_flow}) will be Cahn-Hilliard type system if $\mathcal{G}=\Delta$. The equation is supplemented with the following boundary condition: periodic, or $\frac{\partial \phi}{\partial \textbf{n}}=\frac{\partial \mu}{\partial \textbf{n}}=0$, where $\textbf{n}$ is the unit outward normal vector on the boundary $\partial\Omega$.

Noting that for Cahn-Hilliard model, the operator $\mathcal{G}=-\mathcal{L}$ is non-positive, the free energy $E(\phi)$ in equation (\ref{section1_e_energy functional}) is non-increasing,
\begin{equation}\label{section1_eneegy_decay}
\frac{dE[\phi(t)]}{dt}=\frac{\delta E(\phi)}{\delta\phi}\cdot\frac{\partial\phi}{\partial t}=-(\mu,M\mathcal{L}\mu)\leq0.
\end{equation}

By some simple calculations, we obtain
\begin{equation}\label{section1_eneegy_compute}
\aligned
\frac{dE[\phi(t)]}{dt}
&=\int_{\Omega}\left(F'(\phi)\phi_t+\frac{\epsilon^2}{2}\int_\Omega J(\textbf{x}-\textbf{y})\left[\phi(\textbf{x})-\phi(\textbf{y})\right]\left[\phi_t(\textbf{x})-\phi_t(\textbf{y})\right]d\textbf{y}\right)d\textbf{x}\\
&=\int_{\Omega}F'(\phi)\phi_tdx+\frac{\epsilon^2}{2}\int_{\Omega}\int_\Omega J(\textbf{x}-\textbf{y})\left[\phi(\textbf{x})-\phi(\textbf{y})\right]\phi_t(\textbf{x})d\textbf{y}d\textbf{x}\\
&\quad-\frac{\epsilon^2}{2}\int_{\Omega}\int_\Omega J(\textbf{x}-\textbf{y})\left[\phi(\textbf{x})-\phi(\textbf{y})\right]\phi_t(\textbf{y})d\textbf{y}d\textbf{x}.
\endaligned
\end{equation}

Using the condition $(ii)$ of kernel $J$ and variable substitution, we obtain
\begin{equation}\label{section1_eneegy_compute2}
\aligned
&\int_{\Omega}\int_\Omega J(\textbf{x}-\textbf{y})\left[\phi(\textbf{x})-\phi(\textbf{y})\right]\phi_t(\textbf{y})d\textbf{y}d\textbf{x}\\
&=-\int_{\Omega}\int_\Omega J(\textbf{x}-\textbf{y})\left[\phi(\textbf{x})-\phi(\textbf{y})\right]\phi_t(\textbf{x})d\textbf{y}d\textbf{x}.
\endaligned
\end{equation}

Combining equations $(\ref{section1_eneegy_compute})$ and equation $(\ref{section1_eneegy_compute2})$, we obtain
\begin{equation}\label{section1_eneegy_compute3}
\aligned
\frac{dE[\phi(t)]}{dt}
&=\int_{\Omega}\left(F'(\phi)+\epsilon^2\int_\Omega J(\textbf{x}-\textbf{y})\left[\phi(\textbf{x})-\phi(\textbf{y})\right]d\textbf{y}\right)\phi_t(\textbf{x})d\textbf{x}.
\endaligned
\end{equation}

Then, combining the equations (\ref{section1_nonlocal_operator}), (\ref{section1_gradient_flow}), (\ref{section1_eneegy_decay}), (\ref{section1_eneegy_compute3}) with $\mu=\frac{\delta E}{\delta \phi}$ and $\mathcal{G}=-\mathcal{L}$, we obtain the nonlocal Cahn-Hilliard model with general nonlinear potential:
\begin{equation}\label{section1_e_model2}
  \left\{
   \begin{array}{rlr}
   \displaystyle \frac{\partial \phi}{\partial t}&=-M\mathcal{L}\mu,                   & (\textbf{x},t)\in\Omega\times J,\\
                                              \mu&=\displaystyle F'(\phi)+\epsilon^2\mathcal{L}\phi(\textbf{x}), & (\textbf{x},t)\in\Omega\times J.
   \end{array}
   \right.
\end{equation}

\section{The scalar auxiliary variable semi-implicit schemes for nonlocal Cahn-Hilliard model}
In this section, we execute and analyze linear, first and second order (in time) numerical scalar auxiliary variable (SAV) approaches to construct unconditionally energy stable schemes for the nonlocal Cahn-Hilliard model with general nonlinear potential. The SAV approach is proposed for a large class of gradient flows that describes energy dissipative physical systems. It leads to numerical schemes that enjoy linear second-order unconditionally energy stability, which is very significant to solve the stiffness issue from the thin interface. For the SAV scheme, we only assume $E_1(\phi)=\int_{\Omega}F(\phi)d\textbf{x}$ to be bounded from below, i.e., $E_1(\phi)\geq -C_0$. Similar to \cite{shen2018scalar}, we introduce a scalar auxiliary variable (SAV):
\begin{equation*}
r(t)=\sqrt{E_1+C_0}.
\end{equation*}

Then, the nonlocal Cahn-Hilliard model (\ref{section1_e_model2}) can be transformed into the following formulation:
\begin{equation}\label{section2_e_SAV approach1}
  \left\{
   \begin{array}{rl}
   \displaystyle \frac{\partial \phi}{\partial t}&=-M\mathcal{L}\mu,\\
   \displaystyle \mu&\displaystyle=\epsilon^2\mathcal{L}\phi(\textbf{x})+\frac{r}{\sqrt{E_1(\phi)+C_0}}F'(\phi),\\
    \displaystyle \frac{d r}{d t}&\displaystyle=\frac{1}{2\sqrt{E_1(\phi)+C_0}}\int_{\Omega}F'(\phi)\frac{\partial \phi}{\partial t}d\textbf{x}.
   \end{array}
   \right.
  \end{equation}

Taking the inner products of the above equations with $\mu$, $\phi_t$ and $2r$, respectively, one can obtain the modified energy dissipation law:
\begin{equation*}
\frac{\partial}{\partial t}\left[\epsilon^2(\mathcal{L}\phi,\phi)+|r|^2\right]=-M(\mathcal{L}\mu,\mu),
\end{equation*}
where $
(\mathcal{L}\phi,\phi)=2\int_\Omega \int_\Omega J(\textbf{x}-\textbf{y})\left[\phi(\textbf{x})-\phi(\textbf{y})\right]^2d\textbf{y}d\textbf{x}\geq0$.

Let $N>0$ be a positive integer and set
$$\Delta t=T/N,\quad t^n=n\Delta t,\quad \text{for}\quad n\leq N.$$

A semi-implicit first order SAV scheme for (\ref{section2_e_SAV approach1}) reads as
\begin{empheq}[left=\empheqlbrace]{alignat=2}
&\displaystyle \frac{\phi^{n+1}-\phi^{n}}{\Delta t}=-M\mathcal{L}\mu^{n+1},\label{section2_e_SAV approach2}\\
&\displaystyle \mu^{n+1}=\epsilon^2\mathcal{L}\phi^{n+1}+\frac{r^{n+1}}{\sqrt{E_1(\tilde{\phi}^{n+1})+C_0}}F'(\tilde{\phi}^{n+1}), \label{section2_e_SAV approach2*}\\
&\displaystyle \frac{r^{n+1}-r^{n}}{\Delta t}=\int_{\Omega}\frac{F'(\tilde{\phi}^{n+1})}{2\sqrt{E_1(\tilde{\phi}^{n+1})+C_0}}\frac{\phi^{n+1}-\phi^{n}}{\Delta t}d\textbf{x}, \label{section2_e_SAV approach2**}
\end{empheq}
where $\tilde{\phi}^{n+1}$ is any explicit $O(\Delta t)$ approximation for $\phi(t^{n+1})$, which can be flexible according to the problem. For instance, we may use an extrapolation as follows
\begin{equation}\label{section2_e_SAV approach3}
\aligned
\tilde{\phi}^{n+1}=2\phi^n-\phi^{n-1}.
\endaligned
\end{equation}
Besides, we can also use the following simple first-order scheme to obtain it:
\begin{equation}\label{section2_e_SAV approach4}
\aligned
\frac{\tilde{\phi}^{n+1}-\phi^n}{\Delta t}=-M\mathcal{L}(\epsilon^2\mathcal{L}\tilde{\phi}^{n+1}+F'(\phi^n)).
\endaligned
\end{equation}

\begin{theorem}\label{section2_th_SAV1}
The scheme (\ref{section2_e_SAV approach2})-(\ref{section2_e_SAV approach2**}) for Cahn-Hilliard type system is unconditionally energy stable in the sense that
\begin{equation}\label{section2_e_SAV1}
\aligned
\frac{\epsilon^2}{2}(\mathcal{L}\phi^{n+1},\phi^{n+1})+|r^{n+1}|^2\leq\frac{\epsilon^2}{2}(\mathcal{L}\phi^{n},\phi^{n})+|r^n|^2.
\endaligned
\end{equation}
\end{theorem}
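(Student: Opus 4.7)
The plan is to mimic at the fully discrete level the computation used to derive the continuous modified energy identity. Concretely, I would take the $L^2$ inner product of (3.2) with $\Delta t\,\mu^{n+1}$, take the inner product of (3.3) with $\phi^{n+1}-\phi^n$, multiply (3.4) by $2\Delta t\,r^{n+1}$, and add the three resulting scalar identities. By the very design of the SAV reformulation, the nonlinear coupling quantity
\begin{equation*}
\Bigl(\frac{r^{n+1}F'(\tilde{\phi}^{n+1})}{\sqrt{E_1(\tilde{\phi}^{n+1})+C_0}},\ \phi^{n+1}-\phi^{n}\Bigr)
\end{equation*}
produced by (3.3) is cancelled exactly by the term produced by (3.4), irrespective of the particular choice of the extrapolation $\tilde{\phi}^{n+1}$ (such as (3.5) or (3.6)).

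After this cancellation one is left with an identity of the form
\begin{equation*}
\epsilon^2\bigl(\mathcal{L}\phi^{n+1},\,\phi^{n+1}-\phi^{n}\bigr)+2r^{n+1}(r^{n+1}-r^{n})+M\Delta t\,(\mathcal{L}\mu^{n+1},\mu^{n+1})=0.
\end{equation*}
Next I would apply the polarization identity $2a(a-b)=a^{2}-b^{2}+(a-b)^{2}$ separately to the scalar term $2r^{n+1}(r^{n+1}-r^{n})$ and, using self-adjointness of $\mathcal{L}$, to the bilinear quantity $(\mathcal{L}\phi^{n+1},\phi^{n+1}-\phi^{n})$. Self-adjointness of $\mathcal{L}$ follows directly from the kernel symmetry condition (ii), while the non-negativity $(\mathcal{L}v,v)\geq 0$ recorded in (2.3) guarantees that the three residual terms $\tfrac{\epsilon^{2}}{2}(\mathcal{L}(\phi^{n+1}-\phi^{n}),\phi^{n+1}-\phi^{n})$, $|r^{n+1}-r^{n}|^{2}$ and $M\Delta t\,(\mathcal{L}\mu^{n+1},\mu^{n+1})$ are each non-negative. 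Dropping them in the resulting equality yields exactly the claimed estimate (3.7).

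The argument is unconditional in $\Delta t$ because the principal quadratic parts in $\phi^{n+1}$ and $r^{n+1}$ are treated implicitly while the genuinely nonlinear contribution enters only through the scalar $r^{n+1}$ and is eliminated by the cancellation above; no inverse CFL constraint is generated. The only delicate point, and in my view the step worth isolating most carefully, is the invocation of self-adjointness and positive semi-definiteness of $\mathcal{L}$ when splitting $(\mathcal{L}\phi^{n+1},\phi^{n+1}-\phi^{n})$ by polarization: without the symmetry (ii) of $J$ the discrete time-derivative of $\tfrac{\epsilon^{2}}{2}(\mathcal{L}\phi,\phi)$ cannot be extracted and the entire energy identity collapses. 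Everything else is bookkeeping.
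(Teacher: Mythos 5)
Your proposal is correct and follows essentially the same route as the paper: testing the three equations with $\Delta t\,\mu^{n+1}$, $\phi^{n+1}-\phi^{n}$ and $2\Delta t\,r^{n+1}$, cancelling the nonlinear coupling term, applying the identity $2a(a-b)=a^{2}-b^{2}+(a-b)^{2}$, and dropping the non-negative residuals via $(\mathcal{L}v,v)\geq 0$. Your explicit remark that the polarization step for $(\mathcal{L}\phi^{n+1},\phi^{n+1}-\phi^{n})$ relies on the self-adjointness of $\mathcal{L}$ (i.e.\ the symmetry condition (ii) on $J$) is a point the paper uses tacitly but does not state.
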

\begin{proof}
By taking the inner products with $\Delta t\mu^{n+1}$, $\phi^{n+1}-\phi^n$, $2\Delta tr^{n+1}$ for equations (\ref{section2_e_SAV approach2}), (\ref{section2_e_SAV approach2*}), (\ref{section2_e_SAV approach2**}) respectively and some simple calculations, we obtain
\begin{equation}\label{section2_e_SAV approach5}
\aligned
\epsilon^2(\mathcal{L}\phi^{n+1},\phi^{n+1}-\phi^n)+2(r^{n+1},r^{n+1}-r^n)=-M\Delta t(\mathcal{L}\mu^{n+1},\mu^{n+1})\leq0.
\endaligned
\end{equation}

Using the identity $\textbf{x}\cdot(\textbf{x}-\textbf{y})=\frac{1}{2}|\textbf{x}|^2-\frac{1}{2}|\textbf{y}|^2+\frac{1}{2}|\textbf{x}-\textbf{y}|^2$, the equation (\ref{section2_e_SAV approach5}) can be transformed as follows:
\begin{equation}\label{section2_e_SAV approach6}
\aligned
&\frac{\epsilon^2}{2}(\mathcal{L}\phi^{n+1},\phi^{n+1})-\frac{\epsilon^2}{2}(\mathcal{L}\phi^{n},\phi^{n})+\frac{\epsilon^2}{2}(\mathcal{L}(\phi^{n+1}-\phi^n),\phi^{n+1}-\phi^n)\\
&\quad+|r^{n+1}|^2-|r^n|^2+|r^{n+1}-r^n|^2=-M\Delta t(\mathcal{L}\mu^{n+1},\mu^{n+1})\leq0,
\endaligned
\end{equation}
which states that $\frac{\epsilon^2}{2}(\mathcal{L}\phi^{n+1},\phi^{n+1})+|r^{n+1}|^2\leq\frac{\epsilon^2}{2}(\mathcal{L}\phi^{n},\phi^{n})+|r^n|^2$.
\end{proof}

A semi-implicit second order SAV/BDF scheme for (\ref{section2_e_SAV approach1}) reads as
\begin{empheq}[left=\empheqlbrace]{alignat=2}
&\displaystyle \frac{3\phi^{n+1}-4\phi^{n}+\phi^{n-1}}{2\Delta t}=-M\mathcal{L}\mu^{n+1},\label{section2_e_SAV approach7}\\
&\displaystyle \mu^{n+1}=\epsilon^2\mathcal{L}\phi^{n+1}+\frac{r^{n+1}}{\sqrt{E_1(\tilde{\phi}^{n+\frac{1}{2}})+C_0}}F'(\tilde{\phi}^{n+\frac{1}{2}}), \label{section2_e_SAV approach7*}\\
&\displaystyle \frac{3r^{n+1}-4r^{n}+r^{n-1}}{2\Delta t}=\int_{\Omega}\frac{F'(\tilde{\phi}^{n+\frac{1}{2}})}{2\sqrt{E_1(\tilde{\phi}^{n+\frac{1}{2}})+C_0}}\frac{3\phi^{n+1}-4\phi^{n}+\phi^{n-1}}{2\Delta t}d\textbf{x}, \label{section2_e_SAV approach7**}
\end{empheq}
where $\tilde{\phi}^{n+\frac{1}{2}}$ is any explicit $O(\Delta t^2)$ approximation for $\phi(t^{n+1})$, which can be flexible according to the problem.
\begin{theorem}\label{section2_th_SAV2}
The scheme (\ref{section2_e_SAV approach7})-(\ref{section2_e_SAV approach7**}) for Cahn-Hilliard type system is unconditionally energy stable in the sense that
\begin{equation}\label{section2_e_SAV2}
\aligned
&\frac{\epsilon^2}{2}(\mathcal{L}\phi^{n+1},\phi^{n+1})+\frac{\epsilon^2}{2}(\mathcal{L}(2\phi^{n+1}-\phi^n),2\phi^{n+1}-\phi^n)+|r^{n+1}|^2+|2r^{n+1}-r^n|^2\\
&\leq\frac{\epsilon^2}{2}(\mathcal{L}\phi^{n},\phi^{n})+\frac{\epsilon^2}{2}(\mathcal{L}(2\phi^{n}-\phi^{n-1}),2\phi^{n}-\phi^{n-1})+|r^{n}|^2+|2r^{n}-r^{n-1}|^2.
\endaligned
\end{equation}
\end{theorem}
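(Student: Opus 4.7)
The plan is to mimic the first-order argument, but with BDF2 test functions and the standard BDF2 polarization identity applied both to the scalar variable $r$ and to the symmetric bilinear form induced by $\mathcal{L}$.

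First, I would take the $L^{2}$ inner product of (\ref{section2_e_SAV approach7}) with $2\Delta t\,\mu^{n+1}$, the $L^{2}$ inner product of (\ref{section2_e_SAV approach7*}) with $3\phi^{n+1}-4\phi^{n}+\phi^{n-1}$, and multiply (\ref{section2_e_SAV approach7**}) by $4\Delta t\,r^{n+1}$. Adding the three resulting identities, the two nonlinear contributions produced by the $F'(\tilde{\phi}^{n+\frac{1}{2}})/\sqrt{E_{1}(\tilde{\phi}^{n+\frac{1}{2}})+C_{0}}$ factor in (\ref{section2_e_SAV approach7*}) and (\ref{section2_e_SAV approach7**}) cancel exactly, which is the point of the SAV construction. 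What remains is
\begin{equation*}
\epsilon^{2}\bigl(\mathcal{L}\phi^{n+1},\,3\phi^{n+1}-4\phi^{n}+\phi^{n-1}\bigr)+2\bigl(r^{n+1},\,3r^{n+1}-4r^{n}+r^{n-1}\bigr)=-2M\Delta t\bigl(\mathcal{L}\mu^{n+1},\mu^{n+1}\bigr)\le 0,
\end{equation*}
where non-positivity of the right-hand side uses (\ref{section1_nonlocal_inner}).

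Next I would apply the BDF2 polarization identity
\begin{equation*}
2a(3a-4b+c)=|a|^{2}-|b|^{2}+|2a-b|^{2}-|2b-c|^{2}+|a-2b+c|^{2}
\end{equation*}
in two places. For the scalar terms it is used directly with $a=r^{n+1},\ b=r^{n},\ c=r^{n-1}$. For the nonlocal term I would use the symmetric, positive-semidefinite bilinear form $\langle u,v\rangle_{\mathcal{L}}:=(\mathcal{L}u,v)$; symmetry follows from condition (ii) on the kernel $J$, which by Fubini gives $(\mathcal{L}u,v)=(u,\mathcal{L}v)$, and semidefiniteness from (\ref{section1_nonlocal_inner}). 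Consequently the same polarization identity holds with $a^{2}$ replaced by $\langle a,a\rangle_{\mathcal{L}}$, applied to $a=\phi^{n+1},\ b=\phi^{n},\ c=\phi^{n-1}$.

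Substituting these two expansions, the sum of the quadratic-form terms at level $n+1$ minus the same at level $n$ appears, together with the extra non-negative remainders $\tfrac{\epsilon^{2}}{2}\bigl(\mathcal{L}(\phi^{n+1}-2\phi^{n}+\phi^{n-1}),\phi^{n+1}-2\phi^{n}+\phi^{n-1}\bigr)$ and $|r^{n+1}-2r^{n}+r^{n-1}|^{2}$. Dropping these non-negative remainders and the non-positive dissipation $-2M\Delta t(\mathcal{L}\mu^{n+1},\mu^{n+1})$ yields exactly (\ref{section2_e_SAV2}).

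The only real obstacle is the legitimacy of extending the polarization identity to the nonlocal bilinear form, since $\mathcal{L}$ is neither a differential operator nor obviously self-adjoint at first glance. Once the symmetry $(\mathcal{L}u,v)=(\mathcal{L}v,u)$ is verified from $J(\mathbf{x}-\mathbf{y})=J(\mathbf{y}-\mathbf{x})$ and positive semidefiniteness is recorded from (\ref{section1_nonlocal_inner}), the argument parallels the classical local Cahn--Hilliard BDF2/SAV proof and no further subtlety arises.
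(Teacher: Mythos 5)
Your proposal is correct and follows essentially the same route as the paper: test with $2\Delta t\,\mu^{n+1}$, the BDF2 difference $3\phi^{n+1}-4\phi^{n}+\phi^{n-1}$, and a multiple of $r^{n+1}$, cancel the two nonlinear SAV terms, and apply the BDF2 polarization identity both to $|r|^2$ and to the symmetric positive-semidefinite form $(\mathcal{L}\cdot,\cdot)$ before dropping the non-negative remainders and the dissipation term. Your multiplier $4\Delta t\, r^{n+1}$ is in fact the one that makes the cancellation exact (the paper writes $2\Delta t\, r^{n+1}$ and $2(r^{n+1},r^{n+1}-r^{n})$, which are evidently typos since its displayed conclusion matches your computation), and your explicit check that $(\mathcal{L}u,v)=(u,\mathcal{L}v)$ via $J(\mathbf{x}-\mathbf{y})=J(\mathbf{y}-\mathbf{x})$ supplies a step the paper leaves implicit.
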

\begin{proof}
By taking the inner products with $2\Delta t\mu^{n+1}$, $3\phi^{n+1}-4\phi^{n}+\phi^{n-1}$, $2\Delta tr^{n+1}$ for (\ref{section2_e_SAV approach7}), (\ref{section2_e_SAV approach7*}), (\ref{section2_e_SAV approach7**}) respectively and some simple calculations, we can obtain
\begin{equation}\label{section2_e_SAV approach8}
\aligned
&\epsilon^2(\mathcal{L}\phi^{n+1},3\phi^{n+1}-4\phi^{n}+\phi^{n-1})+2(r^{n+1},r^{n+1}-r^n)\\
&=-2M\Delta t(\mathcal{L}\mu^{n+1},\mu^{n+1})\leq0.
\endaligned
\end{equation}

Using the identity $$2(\textbf{x},3\textbf{x}-4\textbf{y}+\textbf{z})=|\textbf{x}|^2+|\textbf{x}-\textbf{y}|^2+|\textbf{x}-2\textbf{y}+\textbf{z}|^2-|\textbf{y}|^2-|2\textbf{y}-\textbf{z}|^2,$$ the equation (\ref{section2_e_SAV approach8}) can be transformed as follows:
\begin{equation}\label{section2_e_SAV approach9}
\aligned
&\frac{\epsilon^2}{2}(\mathcal{L}\phi^{n+1},\phi^{n+1})+\frac{\epsilon^2}{2}\left(\mathcal{L}(2\phi^{n+1}-\phi^n),2\phi^{n+1}-\phi^n\right)\\
&\quad+\frac{\epsilon^2}{2}\left(\mathcal{L}(\phi^{n+1}-2\phi^n+\phi^{n-1}),\phi^{n+1}-2\phi^n+\phi^{n+1}\right)\\
&\quad-\frac{\epsilon^2}{2}(\mathcal{L}\phi^{n},\phi^{n})-\frac{\epsilon^2}{2}\left(\mathcal{L}(2\phi^{n}-\phi^{n-1}),2\phi^{n}-\phi^{n-1}\right)\\
&\quad+|r^{n+1}|^2+|2r^{n+1}-r^{n}|^2+|r^{n+1}-2r^n+r^{n-1}|^2-|r^{n}|^2-|2r^{n}-r^{n-1}|^2\\
&=-2M\Delta t(\mathcal{L}\mu^{n+1},\mu^{n+1})\leq0,
\endaligned
\end{equation}
which completes the proof.
\end{proof}
\section{The SAV finite difference scheme for nonlocal Cahn-Hilliard model}
In this section, we consider finite difference discretization for nonlocal Cahn-Hilliard model for some spatial operators in the two dimensional space with $\Omega=(-L,L)\times(-R,R)$. We give the linear second order (in time) SAV/BDF finite difference scheme for nonlocal Cahn-Hilliard model. The first order SAV fully discrete scheme can be obtained straightforwardly.

In order to discretize the system (\ref{section2_e_SAV approach7})-(\ref{section2_e_SAV approach7**}), we define $\Omega_h=\{(x_i,y_j)|x_i=-L+ih_x,~y_j=-R+jh_y,~h_x=2L/M_x,~h_y=2R/M_y,~0\leq i\leq M_x,~0\leq j\leq M_y\}$  to be a uniform mesh of the domain $\Omega$.

From the analysis of the nonlocal operator $\mathcal{L}$ in \cite{du2018stabilized}, one can see that
\begin{equation*}
\mathcal{L}\phi=(J\ast1)\phi-J\ast\phi.
\end{equation*}

Then, for any $v$, $\mathcal{L}v$ can be discreted at $(t^{n},x_i,y_j),$ $(0\leq n\leq N,~0\leq i\leq M_x,~0\leq j\leq M_y)$ as follows:
\begin{equation}\label{section3_e_SAV1}
(\mathcal{L}_hv)_{i,j}^n=(J\ast1)_{i,j}v_{i,j}^n-(J\ast v)_{i,j}^n,
\end{equation}
where
\begin{equation*}
\aligned
(J\ast1)_{i,j}v_{i,j}^n
&=h_xh_y\left[\sum\limits_{m_1=1}^{M_x-1}\sum\limits_{m_2=1}^{M_y-1}J(x_{m_1}-x_i,y_{m_2}-y_j)\right.\\
&\quad+\frac{1}{2}\sum\limits_{m_1=1}^{M_x-1}\left(J(x_{m_1}-x_i,y_0-y_j)+J(x_{m_1}-x_i,y_{M_y}-y_j)\right)\\
&\quad+\frac{1}{2}\sum\limits_{m_2=1}^{M_y-1}\left(J(x_0-x_i,y_{m_2}-y_j)+J(x_{M_x}-x_i,y_{m_2}-y_j)\right)\\
&\quad+\frac{1}{4}\left(J(x_0-x_i,y_0-y_j)+J(x_{M_x}-x_i,y_0-y_j)\right)\\
&\quad\left.+\frac{1}{4}\left(J(x_0-x_i,y_{M_y}-y_j)+J(x_{M_x}-x_i,y_{M_y}-y_j)\right)\right]v_{i,j}^n,
\endaligned
\end{equation*}
and
\begin{equation*}
\aligned
(J\ast v)_{i,j}^n
&=h_xh_y\left[\sum\limits_{m_1=1}^{M_x-1}\sum\limits_{m_2=1}^{M_y-1}J(x_{m_1}-x_i,y_{m_2}-y_j)v_{m_1,m_2}^n\right.\\
&\quad+\frac{1}{2}\sum\limits_{m_1=1}^{M_x-1}\left(J(x_{m_1}-x_i,y_0-y_j)v_{m_1,0}^n+J(x_{m_1}-x_i,y_{M_y}-y_j)v_{m_1,M_y}^n\right)\\
&\quad+\frac{1}{2}\sum\limits_{m_2=1}^{M_y-1}\left(J(x_0-x_i,y_{m_2}-y_j)v_{0,m_2}^n+J(x_{M_x}-x_i,y_{m_2}-y_j)v_{M_x,m_2}^n\right)\\
&\quad+\frac{1}{4}\left(J(x_0-x_i,y_0-y_j)v_{0,0}^n+J(x_{M_x}-x_i,y_0-y_j)v_{M_x,0}^n\right)\\
&\quad\left.+\frac{1}{4}\left(J(x_0-x_i,y_{M_y}-y_j)v_{0,M_y}^n+J(x_{M_x}-x_i,y_{M_y}-y_j)v_{M_x,M_y}^n\right)\right].
\endaligned
\end{equation*}

Combining the semi-implicit scheme (\ref{section2_e_SAV approach7})-(\ref{section2_e_SAV approach7}) with equation (\ref{section3_e_SAV1}), we obtain the finite difference discretization for nonlocal Cahn-Hilliard model (\ref{section2_e_SAV approach1}) as follows:
\begin{empheq}[left=\empheqlbrace]{alignat=2}
&\displaystyle \frac{3\phi_{i,j}^{n+1}-4\phi_{i,j}^{n}+\phi_{i,j}^{n-1}}{2\Delta t}=-M(\mathcal{L}_h\mu)_{i,j}^{n+1},\label{section3_e_SAV approach1}\\
&\displaystyle \mu_{i,j}^{n+1}=\epsilon^2(\mathcal{L}_h\phi)_{i,j}^{n+1}+\frac{r^{n+1}}{\sqrt{E_1(\tilde{\phi}^{n+\frac{1}{2}})+C_0}}F'(\tilde{\phi}_{i,j}^{n+\frac{1}{2}}), \label{section3_e_SAV approach1*}\\
&\displaystyle \frac{3r^{n+1}-4r^{n}+r^{n-1}}{2\Delta t}=\int_{\Omega}\frac{F'(\tilde{\phi}^{n+\frac{1}{2}})}{2\sqrt{E_1(\tilde{\phi}^{n+\frac{1}{2}})+C_0}}\frac{3\phi^{n+1}-4\phi^{n}+\phi^{n-1}}{2\Delta t}d\textbf{x}. \label{section3_e_SAV approach1**}
\end{empheq}

\begin{lemma}\label{le2}[Solvability of the SAV finite difference scheme]
For any $\Delta t$, $h_x$, $h_y>0$, the scheme (\ref{section3_e_SAV approach1})-(\ref{section3_e_SAV approach1**}) has a unique solution.
\end{lemma}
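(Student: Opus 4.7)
The plan is to observe that (\ref{section3_e_SAV approach1})--(\ref{section3_e_SAV approach1**}) is a linear system in the unknowns $(\phi^{n+1},\mu^{n+1},r^{n+1})$ whose right-hand side depends only on data at the time levels $n$ and $n-1$, so that unique solvability reduces to proving that the associated homogeneous system admits only the trivial solution. Writing $b_{i,j}:=F'(\tilde{\phi}^{n+\frac{1}{2}}_{i,j})/\sqrt{E_1(\tilde{\phi}^{n+\frac{1}{2}})+C_0}$ and setting $\phi^n=\phi^{n-1}=0$, $r^n=r^{n-1}=0$, the homogeneous system collapses to
\begin{equation*}
\tfrac{3\phi^{n+1}}{2\Delta t}=-M\mathcal{L}_h\mu^{n+1},\qquad \mu^{n+1}=\epsilon^2\mathcal{L}_h\phi^{n+1}+r^{n+1}b,\qquad r^{n+1}=\tfrac{1}{2}(b,\phi^{n+1})_h,
\end{equation*}
where $(\cdot,\cdot)_h$ denotes the weighted discrete $L^2$ inner product associated with the trapezoidal quadrature in (\ref{section3_e_SAV1}), and the third identity comes from (\ref{section3_e_SAV approach1**}) after the BDF differences collapse.

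I would then replay at the discrete level the manipulation used in the proof of Theorem \ref{section2_th_SAV2}: take the discrete inner product of the first identity with $2\Delta t\,\mu^{n+1}$, of the second with $3\phi^{n+1}$, and multiply the scalar third identity by $6r^{n+1}$. When the three resulting lines are summed, the cross terms $3(\mu^{n+1},\phi^{n+1})_h$ and $3r^{n+1}(b,\phi^{n+1})_h$ cancel, leaving the clean identity
\begin{equation*}
3\epsilon^2(\mathcal{L}_h\phi^{n+1},\phi^{n+1})_h+6|r^{n+1}|^2+2M\Delta t\,(\mathcal{L}_h\mu^{n+1},\mu^{n+1})_h=0.
\end{equation*}
Because $\mathcal{L}_h$ is symmetric (the trapezoidal weights are index-separable and $J(\textbf{x}-\textbf{y})=J(\textbf{y}-\textbf{x})$) and positive semi-definite (the discrete analog of (\ref{section1_nonlocal_inner})), each of the three non-negative summands must vanish. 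In particular $r^{n+1}=0$, and the standard fact that $(\mathcal{L}_h v,v)_h=0$ with $\mathcal{L}_h$ symmetric positive semi-definite forces $\mathcal{L}_h v=0$ gives $\mathcal{L}_h\mu^{n+1}=0$. Plugging this into the first identity yields $\phi^{n+1}=0$, and the second identity then yields $\mu^{n+1}=0$.

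The main obstacle I expect is that $\mathcal{L}_h$ is singular -- constants lie in its kernel -- so one cannot simply eliminate $\mu^{n+1}$ and reduce the problem to a scalar elliptic equation for $\phi^{n+1}$ via a naive Schur complement; the coupled three-equation inner-product argument above is what compensates for the degeneracy. A secondary bookkeeping step I would carry out explicitly is to verify that the boundary-corrected trapezoidal weights in (\ref{section3_e_SAV1}) preserve the double-sum identity $(\mathcal{L}_h v,v)_h = \tfrac{1}{2}(h_xh_y)^2\sum w_{i,j}w_{m_1,m_2} J(\textbf{x}_{m_1,m_2}-\textbf{x}_{i,j})(v_{i,j}-v_{m_1,m_2})^2\geq 0$; both the symmetry of $\mathcal{L}_h$ and this PSD identity then follow directly from condition (ii) on $J$.
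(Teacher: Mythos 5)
Your argument is correct, but it proves solvability by a genuinely different route than the paper. You invoke the Fredholm alternative for the square linear system in $(\phi^{n+1},\mu^{n+1},r^{n+1})$ and kill the homogeneous solution with a discrete energy identity, $3\epsilon^2(\mathcal{L}_h\phi^{n+1},\phi^{n+1})_h+6|r^{n+1}|^2+2M\Delta t\,(\mathcal{L}_h\mu^{n+1},\mu^{n+1})_h=0$, which mirrors the proof of Theorem \ref{section2_th_SAV2}; the bookkeeping you flag (self-adjointness and positive semi-definiteness of $\mathcal{L}_h$ with respect to the trapezoidal-weighted inner product, which follow from the index-separable weights and $J(\textbf{x}-\textbf{y})=J(\textbf{y}-\textbf{x})\ge 0$) is exactly what is needed, and the final deduction $(\mathcal{L}_h\mu^{n+1},\mu^{n+1})_h=0\Rightarrow\mathcal{L}_h\mu^{n+1}=0\Rightarrow\phi^{n+1}=0\Rightarrow\mu^{n+1}=0$ is sound. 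The paper instead proceeds constructively: it eliminates $\mu^{n+1}$ and $r^{n+1}$ to obtain $\bigl(I_h+\tfrac{2}{3}M\Delta t\epsilon^2\mathcal{L}_h^2\bigr)\Phi^{n+1}+\tfrac{1}{6}M\Delta t(\eta^{n+1},\Phi^{n+1})\mathcal{L}_h\eta^{n+1}=g$, notes that $A=I_h+\tfrac{2}{3}M\Delta t\epsilon^2\mathcal{L}_h^2$ is positive definite even though $\mathcal{L}_h$ is singular (so no Schur-complement issue arises there), and then determines the scalar $(\eta^{n+1},\Phi^{n+1})$ first via a Sherman--Morrison-type step, using $1+\tfrac{1}{6}M\Delta t\,\theta>0$ with $\theta=(A^{-1}\mathcal{L}_h\eta^{n+1},\eta^{n+1})\ge 0$. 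The paper's argument has the advantage of doubling as the actual solution algorithm (it is how the scheme is implemented, and it also quietly uses that $A^{-1}\mathcal{L}_h$ is symmetric PSD because both are functions of $\mathcal{L}_h$), whereas your argument is shorter on linear algebra, reuses the stability machinery already established, and makes transparent why the degeneracy of $\mathcal{L}_h$ is harmless.
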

\begin{proof}
Denote
\begin{equation*}
\eta^{n+1}=\frac{F'(\tilde{\phi}^{n+\frac{1}{2}})}{\sqrt{E_1(\tilde{\phi}^{n+\frac{1}{2}})+C_0}}.
\end{equation*}
Then, the scheme (\ref{section3_e_SAV approach1})-(\ref{section3_e_SAV approach1**}) can be rewritten as the following formulation:
\begin{equation*}
\aligned
&\left(I_h+\frac{2}{3}M\Delta t\epsilon^2\mathcal{L}_h^2\right)\Phi^{n+1}+\frac{1}{6}M\Delta t(\eta^{n+1},\Phi^{n+1})\mathcal{L}_h\eta^{n+1}\\
&=\frac{4}{3}I_h\Phi^n-\frac{1}{3}I_h\Phi^{n-1}-\frac{1}{3}M\Delta t\left[\frac{4}{3}r^n-\frac{1}{3}r^{n-1}-\frac{2}{3}(\eta^{n+1},\Phi^n)\right.\\
&\quad\left.+\frac{1}{6}(\eta^{n+1},\Phi^{n-1})\right]\mathcal{L}_h\eta^{n+1}.
\endaligned
\end{equation*}

Obviously, the stiffness matrix $A=\left(I_h+\frac{2}{3}M\Delta t\epsilon^2\mathcal{L}_h^2\right)$ is positive definite. From the above equation, we observe that $(\eta^{n+1},\Phi^{n+1})$ needs to be computed first. Multiplying the above equation with $A^{-1}$, and taking the inner product with $\eta^{n+1}$, we obtain
\begin{equation*}
\aligned
&(1+\frac{1}{6}M\Delta t\theta)(\eta^{n+1},\Phi^{n+1})\\
&=\frac{4}{3}(A^{-1}\Phi^n,\eta^{n+1})-\frac{1}{3}(A^{-1}\Phi^{n-1},\eta^{n+1})\\
&\quad-\frac{1}{3}M\Delta t\theta\left[\frac{4}{3}r^n-\frac{1}{3}r^{n-1}-\frac{2}{3}(\eta^{n+1},\Phi^n)+\frac{1}{6}(\eta^{n+1},\Phi^{n-1})\right],
\endaligned
\end{equation*}
where $\theta=(A^{-1}\mathcal{L}_h\eta^{n+1},\eta^{n+1})\geq0$.

Noting that $(1+\frac{1}{6}M\Delta t\theta)\neq0$ and the stiffness matrix $A$ is positive definite, one can conclude that the scheme (\ref{section3_e_SAV approach1})-(\ref{section3_e_SAV approach1**}) has a unique solution.
\end{proof}
\section{The fast solution method}
From the discrete formulation (\ref{section3_e_SAV1}), one can see that the nonlocal diffusion term will
lead the stiffness matrix to be an almost full matrix which requires huge computational work and large memory. In this case, the fast solution method for solving derived linear system will become very important and necessary. In this section, we will analyse the structure of the stiffness matrix and seek some effective fast solution method to reduce the computational work and memory requirement. This fast solution technique is based on a fast Fourier transform and depends on the special structure of coefficient matrices.

Without loss of generality, we assume $\Omega=[-L,L]\times[-L,L]$ and the partition is uniform and satisfies $h=h_x=h_y$. For any $Au=f$ in solving the discrete formulation \eqref{section3_e_SAV approach1}-\eqref{section3_e_SAV approach1**}, we all use conjugate gradient method to obtain the solution of linear system: Let $u_{0}$ be an initial guess. Then, compute $r_{0}=f-Au_{0}$, $d_{1}=r_{0}$ and
\begin{flalign*}
\begin{split}
\hspace{10mm}%
&\omega_{1}=r^{T}_{0}r_{0}/d^{T}_{1}Ad_{1}\\
\hspace{10mm}%
&u_{1}=u_{0}+\omega_{1}d_{1}\\
\hspace{10mm}%
&r_{1}=r_{0}-\omega_{1}Ad_{1}\\
for\ &k=2,3,\ldots\\
\hspace{10mm}%
&\gamma_{k}=r^{T}_{k-1}r_{k-1}/r^{T}_{k-2}r_{k-2}\\
\hspace{10mm}%
&d_{k}=r_{k-1}+\gamma_{k}d_{k-1}\\
\hspace{10mm}%
&\omega_{k}=r^{T}_{k-1}r_{k-1}/d^{T}_{k}Ad_{k}\\
\hspace{10mm}%
&u_{k}=u_{k-1}+\omega_{k}d_{k}\\
\hspace{10mm}%
&r_{k}=r_{k-1}-\omega_{k}Ad_{k}\\
\hspace{10mm}%
&\text{Check\ for\ convergence,\ continue\ if\ necessary}\\
end&\\
&u=u_{k}.
\end{split}&
\end{flalign*}

From the algorithm of conjugate gradient method, one can see that for reducing the huge computational work and memory requirement, we only need to find fast and efficient procedure to accelerate the matrix-vector multiplication $Ad$ for any vector $d$ and store $A$ efficiently.

Based on the above analysis, we note that the stiffness matrix $A=\left(I_h+\frac{2}{3}M\Delta t\epsilon^2\mathcal{L}_h^2\right)$. So, we only need to analyze the structure of the matrix-vector multiplication $\mathcal{L}_hd$.

First, we can rewrite the discrete formulation $(J\ast v)_{i,j}$ as follows
\begin{equation}\label{section4_e1}
\aligned
(J\ast v)_{i,j}
&=\frac{1}{4}h^2\sum\limits_{m_1=0}^{M-1}\sum\limits_{m_2=0}^{M-1}J(x_{m_1}-x_i,y_{m_2}-y_j)v_{m_1,m_2}\\
&+\frac{1}{4}h^2\sum\limits_{m_1=1}^{M}  \sum\limits_{m_2=0}^{M-1}J(x_{m_1}-x_i,y_{m_2}-y_j)v_{m_1,m_2}\\
&+\frac{1}{4}h^2\sum\limits_{m_1=0}^{M-1}\sum\limits_{m_2=1}^{M}  J(x_{m_1}-x_i,y_{m_2}-y_j)v_{m_1,m_2}\\
&+\frac{1}{4}h^2\sum\limits_{m_1=1}^{M}  \sum\limits_{m_2=1}^{M}  J(x_{m_1}-x_i,y_{m_2}-y_j)v_{m_1,m_2}.
\endaligned
\end{equation}

Define $\gamma_{i,j}=(J\ast v)_{i,j}$, ${\bf{\Lambda}}_i=(\gamma_{i,0},\gamma_{i,1},\cdots,\gamma_{i,M})$ and ${\bf{\Lambda}}=({\bf{\Lambda}}_1,{\bf{\Lambda}}_2,\cdots,{\bf{\Lambda}}_M)^T$. Then, define four transformation operators $\mathcal{A}_1$, $\mathcal{A}_2$, $\mathcal{A}_3$, $\mathcal{A}_4$. For any $(M+1)^2$-vector $v=(v_0,v_1,\ldots,v_{M})^T,$ $v_j=(v_{0,j},v_{1,j},\ldots,x_{M,j})$, the operators $\mathcal{A}_i$, $(i=1,2,3,4)$ satisfy:
\begin{equation*}
   \mathcal{A}_iv=f_i,\quad i=1,2,3,4,
\end{equation*}
where for $0\leq j\leq M-1$ and $1\leq k\leq M$,
\begin{equation}\label{section4_e2}
   \begin{array}{rlrl}
   f_1&=(\alpha_0,\alpha_1,\ldots,\alpha_{M-1},\textbf{0})^T, & \alpha_j&=(v_{0,j},v_{1,j},\ldots,v_{M-1,j},0),\\
   f_2&=(\textbf{0},\beta_1, \beta_2,\ldots,\beta_{M})^T,     & \beta_k &=(v_{0,k},v_{1,k},\ldots,v_{M-1,k},0),\\
   f_3&=(\zeta_0,\zeta_1,\ldots,\zeta_{M-1},\textbf{0})^T,    & \zeta_j &=(0,v_{1,j},v_{2,j},\ldots,v_{M,j}),\\
   f_4&=(\textbf{0},\theta_1,\theta_2,\ldots,\theta_{M})^T,   & \theta_k&=(0,v_{1,k},v_{2,k},\ldots,v_{M,k}),
   \end{array}
\end{equation}

Define the following four vectors $\alpha_M$, $\beta_0$, $\zeta_M$ and $\theta_0$ to be all zero $(1\times M)$-vectors. Then, the equation (\ref{section4_e1}) can be rewritten as the following formulation
\begin{equation*}
\aligned
(J\ast v)_{i,j}
&=\frac{1}{4}h^2\sum\limits_{m_1=0}^{M}\sum\limits_{m_2=0}^{M}J(x_{m_1}-x_i,y_{m_2}-y_j)\alpha_{m_2}(m_1)\\
&+\frac{1}{4}h^2\sum\limits_{m_1=0}^{M}\sum\limits_{m_2=0}^{M}J(x_{m_1}-x_i,y_{m_2}-y_j)\beta_{m_2}(m_1)\\
&+\frac{1}{4}h^2\sum\limits_{m_1=0}^{M}\sum\limits_{m_2=0}^{M}J(x_{m_1}-x_i,y_{m_2}-y_j)\zeta_{m_2}(m_1)\\
&+\frac{1}{4}h^2\sum\limits_{m_1=0}^{M}\sum\limits_{m_2=0}^{M}J(x_{m_1}-x_i,y_{m_2}-y_j)\theta_{m_2}(m_1)\\
&=\frac{1}{4}h^2\sum\limits_{m_1=0}^{M}\sum\limits_{m_2=0}^{M}J(x_{m_1}-x_i,y_{m_2}-y_j)\\
&\quad\times\left[\alpha_{m_2}(m_1)+\beta_{m_2}(m_1)+\zeta_{m_2}(m_1)+\theta_{m_2}(m_1)\right].
\endaligned
\end{equation*}

By the above equation, we can compute the vector ${\bf{\Lambda}}$ by the following equation:
\begin{equation}\label{section4_e3}
\aligned
{\bf{\Lambda}}=\frac{1}{4}h^2\textbf{B}(f_1+f_2+f_3+f_4).
\endaligned
\end{equation}

Then, for any $(M+1)^2$-vector $v$, we obtain the matrix-vector multiplication $\mathcal{L}_hv$ by
\begin{equation*}
\aligned
\mathcal{L}_hv=\frac{1}{4}h^2\textbf{B}(f_1+f_2+f_3+f_4)-\frac{1}{4}h^2\left[\textbf{B}(\mathcal{A}_1+\mathcal{A}_2+\mathcal{A}_3+\mathcal{A}_4)\mathcal{I}\right]\cdot v.
\endaligned
\end{equation*}

Combining equation (\ref{section4_e1}) and equation (\ref{section4_e3}), and noting the four vectors in (\ref{section4_e2}), the matrix $\textbf{B}$ can be written as follows:
\begin{flalign*}
\textbf{B}=\left(
\begin{array}{cccccccc}
\textbf{S}_{0}   &\textbf{S}_{1}   &\textbf{S}_{2}   &\cdots  &\textbf{S}_{M-2} &\textbf{S}_{M-1} &S_{M}\\
\textbf{S}_{1}   &\textbf{S}_{0}   &\textbf{S}_{1}   &\cdots  &\textbf{S}_{M-3} &\textbf{S}_{M-2} &\textbf{S}_{M-1}\\
\textbf{S}_{2}   &\textbf{S}_{1}   &\textbf{S}_{0}   &\textbf{S}_{1}   &\vdots  &\vdots  &\vdots\\
\vdots  &\vdots  &\ddots  &\ddots  &\ddots  &\vdots  &\vdots\\
\textbf{S}_{M-2} &\textbf{S}_{M-3} &\cdots  &\cdots  &\textbf{S}_{0}   &\textbf{S}_{1}   &\textbf{S}_{2}\\
\textbf{S}_{M-1} &\textbf{S}_{M-2} &\textbf{S}_{M-3} &\cdots  &\textbf{S}_{1}   &\textbf{S}_{0}   &\textbf{S}_{1}\\
\textbf{S}_{M}   &\textbf{S}_{M-1} &\textbf{S}_{M-2} &\textbf{S}_{M-3} &\cdots  &\textbf{S}_{1}   &\textbf{S}_{0}\\
\end{array}
\right).
\end{flalign*}

Define $x_{i,j}=J(ih,jh)$. By simple calculation, the block matrix $\textbf{S}_j$ can be expressed as the following Toeplitz formulation:
\begin{flalign*}
\textbf{S}_j=\left(
\begin{array}{ccccccc}
x_{0,j}     &x_{1,j}         &\cdots      &x_{M-1,j}   &x_{M,j}\\
x_{1,j}     &x_{0,j}         &\cdots      &x_{M-2,j}   &x_{M-1,j}\\
\vdots      &\vdots          &\ddots      &\vdots      &\vdots\\
x_{M-1,j}   &x_{M-2,j}       &\cdots      &x_{0,j}     &x_{1,j}\\
x_{M,j}     &x_{M-1,j}       &x_{M-2,j}   &\cdots      &x_{0,j}\\
\end{array}
\right).
\end{flalign*}

Therefore, we note that the matrix $\textbf{B}$ is a Block-Toeplitz-Toeplitz-Block (BTTB) matrix. Then, we can use fast Fourier transform (FFT) method to evaluate $\textbf{B}G$ for $G=f_1+f_2+f_3+f_4$ which can reduce the computational work and memory requirement effectively.

Firstly, the block Toeplitz matrix $\textbf{S}_j$ can be embedded into a $(2M+2)\times(2M+2)$ circulant matrix:
\begin{flalign*}
\textbf{C}_j=\left(
\begin{array}{cccccc}
\textbf{S}_j&\textbf{Q}_j\\
\textbf{Q}_j&\textbf{S}_j
\end{array}
\right),
\qquad \textbf{Q}_j=\left(
\begin{array}{ccccccc}
0         &x_{M,j}     &\cdots      &x_{2,j}   &x_{1,j}\\
x_{M,j}   &0           &\cdots      &x_{3,j}   &x_{2,j}\\
\vdots    &\vdots      &\ddots      &\vdots    &\vdots\\
x_{2,j}   &x_{3,j}     &\cdots      &0         &x_{M,j}\\
x_{1,j}   &x_{2,j}     &x_{3,j}     &\cdots    &0\\
\end{array}
\right).
\end{flalign*}

Then, replacing the block matrix $\textbf{S}_j$ with the block circulant matrix $\textbf{C}_j$ in the BTTB matrix $\textbf{B}$, we obtain a Bolck-Toeplitz-Circulant-Block (BTCB) matrix $\widehat{\textbf{B}}$
\begin{flalign*}
\widehat{\textbf{B}}=\left(
\begin{array}{cccccccc}
\textbf{C}_{0}   &\textbf{C}_{1}   &\textbf{C}_{2}   &\cdots  &\textbf{C}_{M-2} &\textbf{C}_{M-1} &\textbf{C}_{M}\\
\textbf{C}_{1}   &\textbf{C}_{0}   &\textbf{C}_{1}   &\cdots  &\textbf{C}_{M-3} &\textbf{C}_{M-2} &\textbf{C}_{M}\\
\textbf{C}_{2}   &\textbf{C}_{1}   &\textbf{C}_{0}   &\textbf{C}_{1}   &\vdots  &\vdots  &\vdots\\
\vdots  &\vdots  &\ddots  &\ddots  &\ddots  &\vdots  &\vdots\\
\textbf{C}_{M-2} &\textbf{C}_{M-3} &\cdots  &\cdots  &\textbf{C}_{0}   &\textbf{C}_{1}   &\textbf{C}_{2}\\
\textbf{C}_{M-1} &\textbf{C}_{M-2} &\textbf{C}_{M-3} &\cdots  &\textbf{C}_{1}   &\textbf{C}_{0}   &\textbf{C}_{1}\\
\textbf{C}_{M}   &\textbf{C}_{M-1} &\textbf{C}_{M-2} &\textbf{C}_{M-3} &\cdots  &\textbf{C}_{1}   &\textbf{C}_{0}\\
\end{array}
\right).
\end{flalign*}

The BTCB matrix $\widehat{\textbf{B}}$ can be embedded into a $(2M+2)\times(2M+2)$ Bolck-Circulant-Circulant-Block (BCCB) matrix $\textbf{D}$ as follows
\begin{flalign*}
\textbf{D}=\left(
\begin{array}{cccccc}
\widehat{\textbf{B}}&\textbf{K}\\
\textbf{K}&\widehat{\textbf{B}}
\end{array}
\right),
\qquad K=\left(
\begin{array}{ccccccc}
0         &\textbf{C}_M         &\cdots      &\textbf{C}_{2}     &\textbf{C}_{1}\\
\textbf{C}_M       &0           &\cdots      &\textbf{C}_{3}     &\textbf{C}_{2}\\
\vdots    &\vdots      &\ddots      &\vdots    &\vdots\\
\textbf{C}_{2}     &\textbf{C}_{3}       &\cdots      &0         &\textbf{C}_{M}\\
\textbf{C}_{1}     &\textbf{C}_{2}       &\textbf{C}_{3}       &\cdots    &0\\
\end{array}
\right).
\end{flalign*}

The BCCB matrix $\textbf{D}$ has the following decomposition
\begin{flalign}\label{be1}
\textbf{D}=\left(F\otimes F\right)^{-1}diag(Fd)\left(F\otimes F\right),
\end{flalign}
where $d$ is the first column vector of $\textbf{D}$ and $F\otimes F$ is the two dimensional discrete Fourier transform matrix. Then, it is well known that the matrix-vector multiplication $Fw$ for $w\in\mathbb{R}^{(2M+2)^2}$ can be carried out in $O(M^2$log$M^2)$ operations via the fast Fourier transform (FFT). Equation (\ref{be1}) shows that $\textbf{D}w$ can be evaluated in $O(M^2$log$M^2)$ operations. Define $N=(M+1)^2$ and for $G=(g_0,g_1,\ldots,g_M)^T$, define $\widehat{G}=(g_0,0,g_1,0,\ldots,g_M,0)$. Then, we obtain that $\textbf{B}G$ can be evaluated in $O(N$log$N)$ operations for any $G\in\mathbb{R}^{N}$ by evaluating $\textbf{D}\widetilde{G}$ with FFT where $\widetilde{G}=(\widehat{G},0)$. The overall computational cost of the fast conjugate gradient method is $O(N$log$^2N)$, since the number of iterations is $O($log$N)$. What we need to focus on is that if one uses the Gaussian elimination method straightforwardly to this linear system, then it requires $O(N^3)$ complexity. In addition, since $N\times N$ BTTB matrix is determined by only $2N-1$ entries rather than $N^2$ entries, the fast solver will reduce memory requirement from $O(N^{2})$ to $O(N)$.

\section{Numerical experiments}
In this section, we present some numerical examples for the nonlocal Cahn-Hilliard equation in two dimension to test our theoretical analysis which contains energy stability and convergence rates of the proposed numerical schemes. We use the finite difference method for spatial discretization for all numerical examples. In all examples, we set the domain $\Omega=(-1,1)\times(-1,1)$. All the solvers are implemented using Matlab and all the numerical experiments are performed on a computer with 8-GB memory.

The Gaussian kernel $J_\delta$ will be given below \cite{du2018stabilized}:
\begin{equation*}
J_\delta(\textbf{x})=\frac{4}{\pi^{d/2}\delta^{d+2}}e^{-\frac{|\textbf{x}|^2}{\delta^2}},\quad \textbf{x}\in \mathbb{R}^{d},\quad \delta>0.
\end{equation*}

From \cite{du2018stabilized}, one can see that for any $v\in C^\infty(\Omega)$, $\textbf{x}\in\Omega$, $\mathcal{L}_\delta v(\textbf{x})\rightarrow-\Delta v(\textbf{x})$ as $\delta\rightarrow0$. It tells us that the nonlocal Cahn-Hilliard model equipped with above Gaussian kernel converges to the classical local Cahn-Hilliard model as $\delta\rightarrow0$.

We first give an example to test convergence rates of the proposed schemes (first order: SCHEM1 and second order:SCHEM2 in time) for the nonlocal Cahn-Hilliard equation in two dimension and check the efficiency of our fast procedure.

\textbf{Example 1}: Consider the nonlocal Cahn-Hilliard equation with $\epsilon^2=0.1$, $M=1$, $\delta=\epsilon$, $T=0.05$ and the following initial condition:
\begin{equation}\label{section5_e1}
\aligned
\phi_0(x,y)=0.5\sin(\pi x)\sin(\pi y)+0.1.
\endaligned
\end{equation}

To observe the temporal convergence rate, we first calculate the reference solution $\phi_{ref}$ with $h_{ref}=0.01$ and $\Delta t_{ref}=0.05\times2^{-14}$ since the exact solution is not known. Then, we use both direct solver (which means using the self-contained function $x=A\setminus b$ in Matlab R2015a) and fast solver (which means fast CG solver in Section 4) to obtain approximate solution with the time step sizes $\Delta t=0.05\times2^{-4}$, $0.05\times2^{-5}$, $0.05\times2^{-6}$, $0.05\times2^{-7}$, $0.05\times2^{-8}$, $0.05\times2^{-9}$ and $h=0.01$. Tables \ref{tab:tab1} and \ref{tab:tab2} show the discrete $L_2$ errors and the temporal convergence rates of numerical solutions and the CPU time costs for both direct and fast solvers. One can see that with the same mesh and time step sizes, both direct and fast solvers generate numerical solutions with almost the same errors and convergence rates. Furthermore, we again observe that fast solver saves more CPU time than direct solver to obtain same accuracy.
\begin{table}[h!b!p!]
\small
\renewcommand{\arraystretch}{1.1}
\centering
\caption{\small SCHEM1: the $L_2$ errors, temporal convergence rates and CPU time for direct solver and fast solver with $h_{ref}=0.01$, $\Delta t_{ref}=2^{-14}T$ for Example 1 with initial value $\phi_0(x,y)=0.5\sin(\pi x)\sin(\pi y)+0.1$.}\label{tab:tab1}
\begin{tabular}{ccccccc}
\hline
$\Delta t$&\multicolumn{3}{c}{Direct Solver}&\multicolumn{3}{c}{Fast Solver}\\
\cline{2-4}\cline{5-7}
&$L_2$ error&Rate&CPU Time(s)&$L_2$ error&Rate&CPU Time(s)\\
\hline
$2^{-4}T$ &2.5139e-3&-     &538  &2.5139e-3&-     &10.88\\
$2^{-5}T$ &1.5066e-3&0.7386&1035 &1.5066e-3&0.7386&16.95\\
$2^{-6}T$ &8.2386e-4&0.8708&2046 &8.2386e-4&0.8708&27.04\\
$2^{-7}T$ &4.2956e-4&0.9395&4045 &4.2956e-4&0.9395&47.48\\
$2^{-8}T$ &2.1803e-4&0.9783&7920 &2.1803e-4&0.9783&82.00\\
$2^{-9}T$ &1.0852e-4&1.0066&15758&1.0852e-4&1.0066&162\\
\hline
\end{tabular}
\end{table}
\begin{table}[h!b!p!]
\small
\renewcommand{\arraystretch}{1.1}
\centering
\caption{\small SCHEME2: the $L_2$ errors, temporal convergence rates and CPU time for direct solver and fast solver with $h_{ref}=0.01$, $\Delta t_{ref}=2^{-14}T$ for Example 1 with initial value $\phi_0(x,y)=0.5\sin(\pi x)\sin(\pi y)+0.1$.}\label{tab:tab2}
\begin{tabular}{ccccccc}
\hline
$\Delta t$&\multicolumn{3}{c}{Direct Solver}&\multicolumn{3}{c}{Fast Solver}\\
\cline{2-4}\cline{5-7}
&$L_2$ error&Rate&CPU Time(s)&$L_2$ error&Rate&CPU Time(s)\\
\hline
$2^{-4}T$ &9.2979e-4&-     &592   &9.2979e-4&-     &9.66\\
$2^{-5}T$ &2.5391e-4&1.8726&1091  &2.5391e-4&1.8726&15.65\\
$2^{-6}T$ &6.6781e-5&1.9268&2103  &6.6781e-5&1.9268&27.02\\
$2^{-7}T$ &1.7155e-5&1.9608&4147  &1.7155e-5&1.9608&47.98\\
$2^{-8}T$ &4.3486e-6&1.9800&7965  &4.3486e-6&1.9800&82.82\\
$2^{-9}T$ &1.0932e-6&1.9920&15834 &1.0932e-6&1.9920&140.6\\
\hline
\end{tabular}
\end{table}

To observe spatial convergence rates, we calculate the reference solution $\phi_{ref}$ with $h_{ref}=2^{-10}$ and $\Delta t_{ref}=5e-5$. Then, we use $h=2^{-3},\ 2^{-4},\ 2^{-5},\ 2^{-6},\ 2^{-7},\ 2^{-8}$, and $\Delta t=5e-5$ to obtain the approximate solution at $t=T$ for direct solver and fast solver. Tables \ref{tab:tab3} and \ref{tab:tab4} show the discrete $L_2$ errors, the spatial convergence rates of numerical solutions and CPU time costs for both direct and fast solvers. It can be observed that the spatial error are almost $O(h^2)$ for both SCHEM1 and SCHEM2. One can also obtain that the fast solver has almost the same error and convergence rate as direct solver. In addition, we again observe that fast solver saves more CPU time and memory requirement than direct solver to obtain similar accuracy. Thus, we can realize the numerical simulation under the finer meshes more quickly through the fast algorithm.

\begin{table}[h!b!p!]
\small
\renewcommand{\arraystretch}{1.1}
\centering
\caption{\small SCHEM1: the $L_2$ errors, spatial convergence rates and CPU time for direct solver and fast solver with $h_{ref}=2^{-10}$, $\Delta t_{ref}=5e^{-5}$ for Example 1 with initial value $\phi_0(x,y)=0.5\sin(\pi x)\sin(\pi y)+0.1$.}\label{tab:tab3}
\begin{tabular}{ccccccc}
\hline
$h$&\multicolumn{3}{c}{Direct Solver}&\multicolumn{3}{c}{Fast Solver}\\
\cline{2-4}\cline{5-7}
&$L_2$ error&Rate&CPU Time(s)&$L_2$ error&Rate&CPU Time(s)\\
\hline
$2^{-3}$ &6.7255e-3&-     &6.92         &6.7255e-3&-     &12.13\\
$2^{-4}$ &1.4786e-3&2.1854&87.86        &1.4786e-3&2.1854&19.42\\
$2^{-5}$ &3.4718e-4&2.0905&2689         &3.4718e-4&2.0905&72.54\\
$2^{-6}$ &8.3250e-5&2.0602&12436        &8.3250e-5&2.0602&280.8\\
$2^{-7}$ &N/A      &N/A &Out of Memory  &1.9526e-5&2.0921&1527\\
$2^{-8}$ &N/A      &N/A &Out of Memory  &3.8761e-6&2.3327&7979\\
\hline
\end{tabular}
\end{table}

\begin{table}[h!b!p!]
\small
\renewcommand{\arraystretch}{1.1}
\centering
\caption{\small SCHEM2: The $L_2$ errors, spatial convergence rates and CPU time for direct solver and fast solver with $h_{ref}=2^{-10}$, $\Delta t_{ref}=5e^{-5}$ for Example 1 with initial value $\phi_0(x,y)=0.5\sin(\pi x)\sin(\pi y)+0.1$.}\label{tab:tab4}
\begin{tabular}{ccccccc}
\hline
$h$&\multicolumn{3}{c}{Direct Solver}&\multicolumn{3}{c}{Fast Solver}\\
\cline{2-4}\cline{5-7}
&$L_2$ error&Rate&CPU Time(s)&$L_2$ error&Rate&CPU Time(s)\\
\hline
$2^{-3}$ &6.7282e-3&-     &6.76         &6.7282e-3&-     &11.88\\
$2^{-4}$ &1.4793e-3&2.1853&92.36        &1.4793e-3&2.1853&16.28\\
$2^{-5}$ &3.4733e-4&2.0905&2866         &3.4733e-4&2.0905&68.86\\
$2^{-6}$ &8.3287e-5&2.0601&12738        &8.3287e-5&2.0601&275.4\\
$2^{-7}$ &N/A      &N/A &Out of Memory  &1.9535e-5&2.0920&1531\\
$2^{-8}$ &N/A      &N/A &Out of Memory  &3.8778e-6&2.3328&7233\\
\hline
\end{tabular}
\end{table}

In the following examples, we study the phase separation behavior using the second order scheme SCHEM2 and obtain the numerical solution by fast solver.

\textbf{Example 2}: In the following, we take $\epsilon=0.02$, $M=1$, $\delta=0.02$. The initial condition is chosen as
\begin{equation}\label{section5_e2}
\aligned
\phi_0(x,y,0)=\sum\limits_{i=1}^2-\tanh\left(\frac{\sqrt{(x-x_i)^2+(y-y_i)^2}-R_0}{\sqrt{2}\epsilon}\right)+1.
\endaligned
\end{equation}
with the radius $R_0=0.36$, $(x_1,y_1)=(0.4,0)$ and $(x_2,y_2)=(-0.4,0)$. Initially, two bubbles, centered at $(0.4,0)$ and $(-0.4,0)$, respectively, are osculating or "kissing". In the simulation, we choose the mesh size $h=1/100$ and the time step $\Delta t=10^{-3}$. The process coalescence of two bubbles is demonstrated in Figure \ref{fig:fig1}. Snapshots of the phase variable $\phi$ are taken at $t=0$, $0.05$, $0.1$, $1$, $5$, $10$ in Figure \ref{fig:fig1}. In Figure \ref{fig:fig1}, we show the evolutions of the phase field variable $\phi$ at various time by using the time step $\Delta t=1e-3$. We observe the coarsening effect that the small circle is absorbed into the big circle, and the total absorption happens at around $t=10$ which is consistent with classical Cahn-Hilliard model.
\begin{figure}[htp]
\centering
\subfigure[t=0]{
\includegraphics[width=3cm,height=3cm]{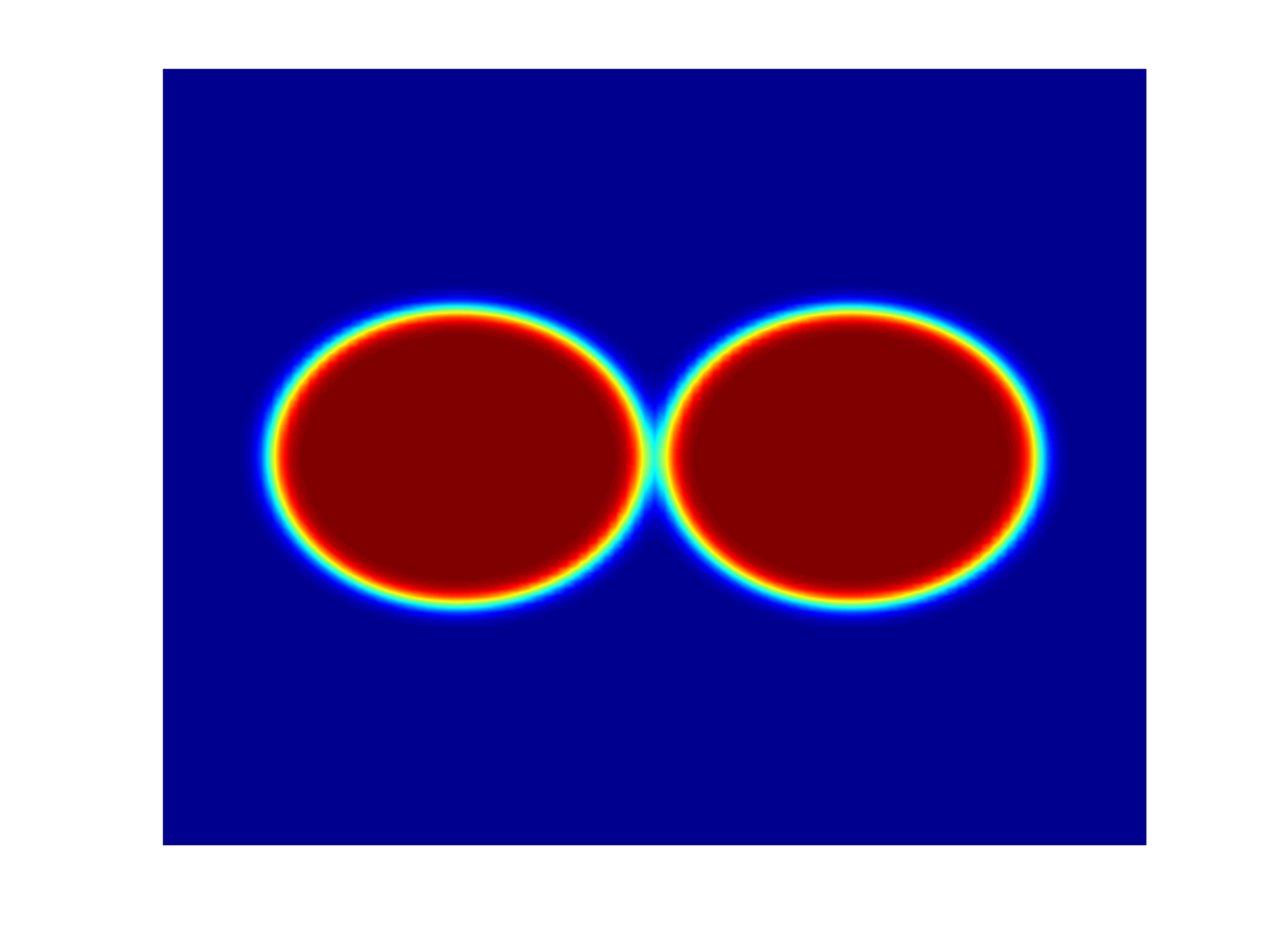}
}
\subfigure[t=0.05]
{
\includegraphics[width=3cm,height=3cm]{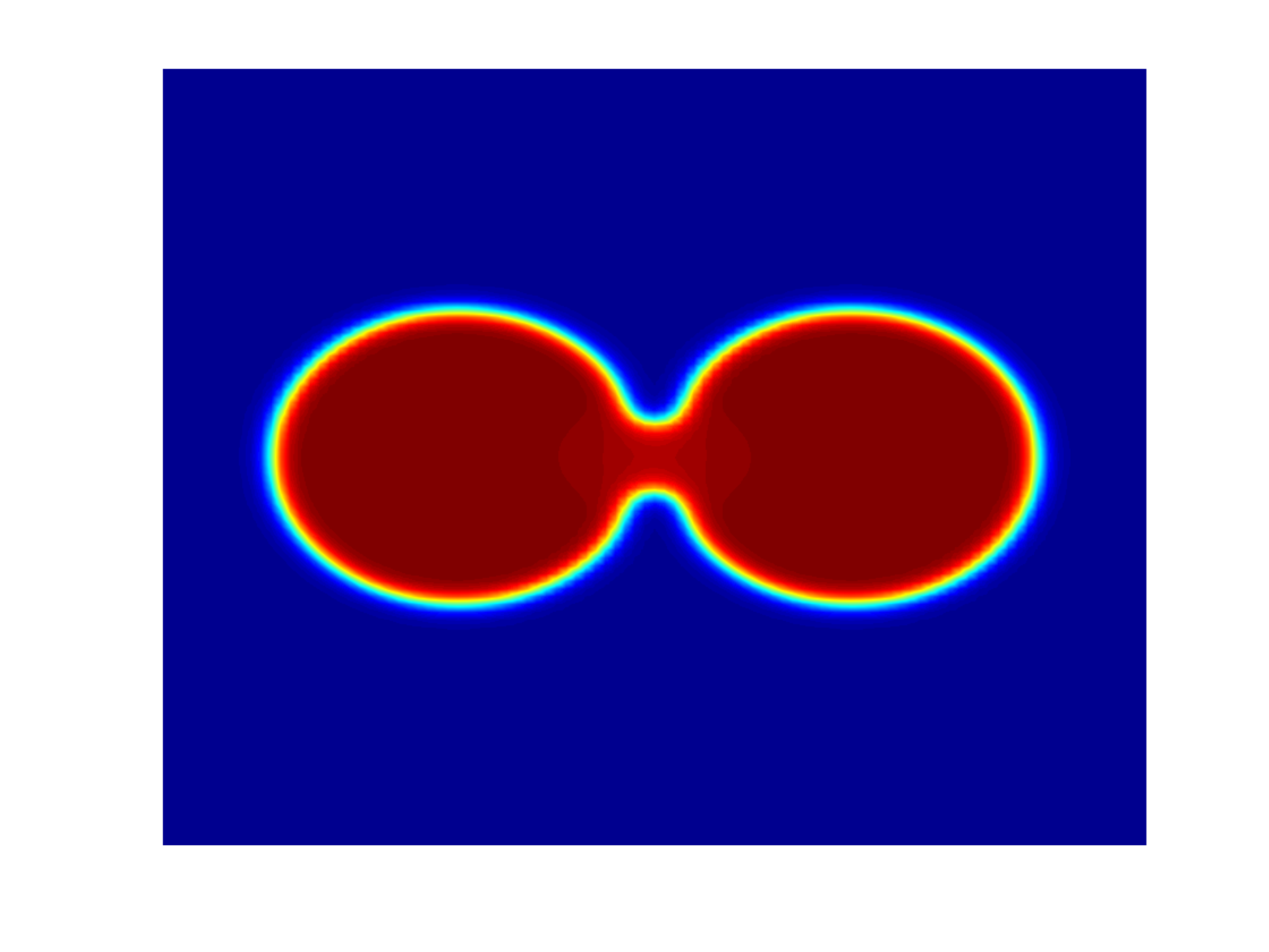}
}
\subfigure[t=0.1]
{
\includegraphics[width=3cm,height=3cm]{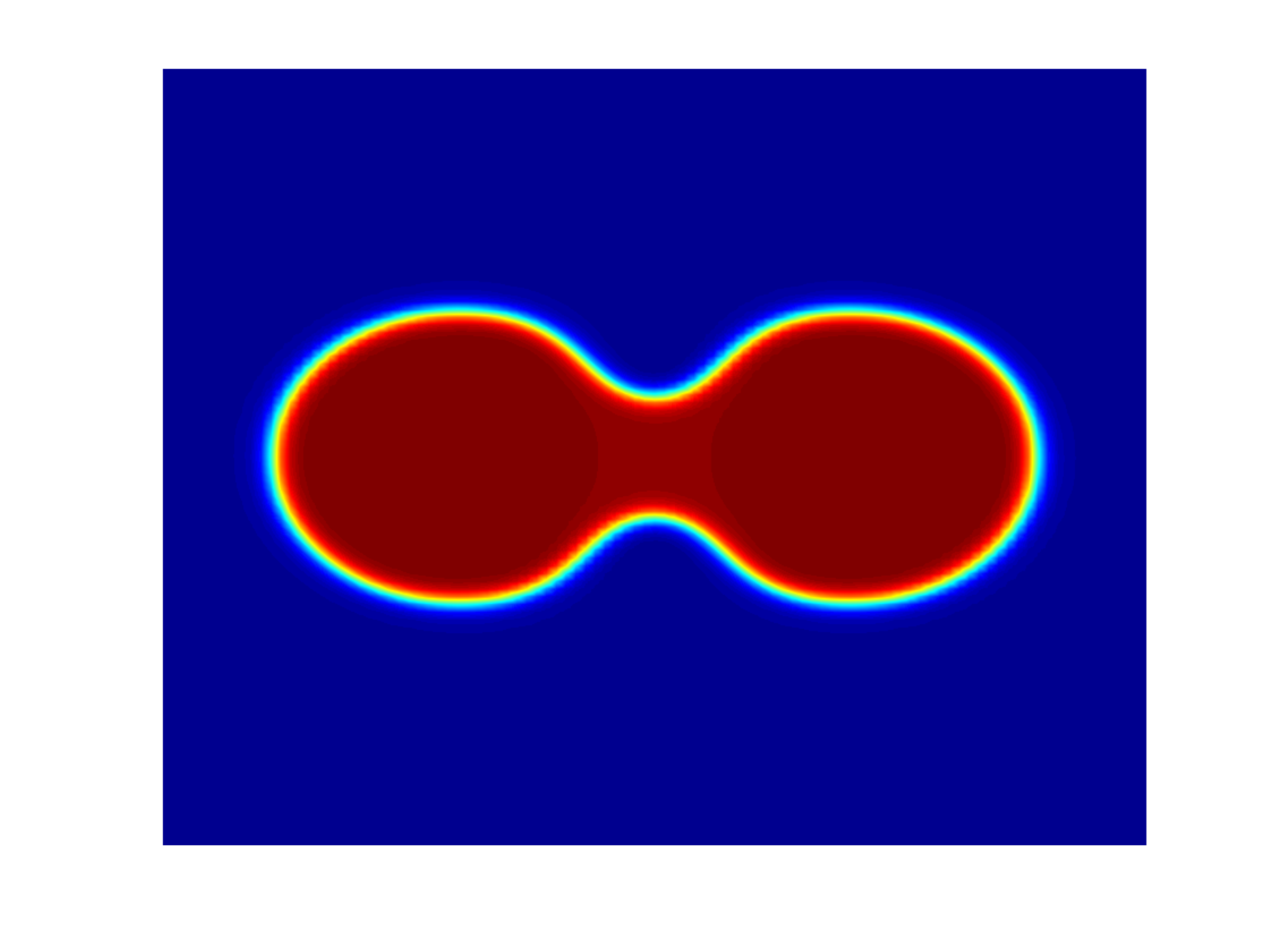}
}
\quad
\subfigure[t=1]{
\includegraphics[width=3cm,height=3cm]{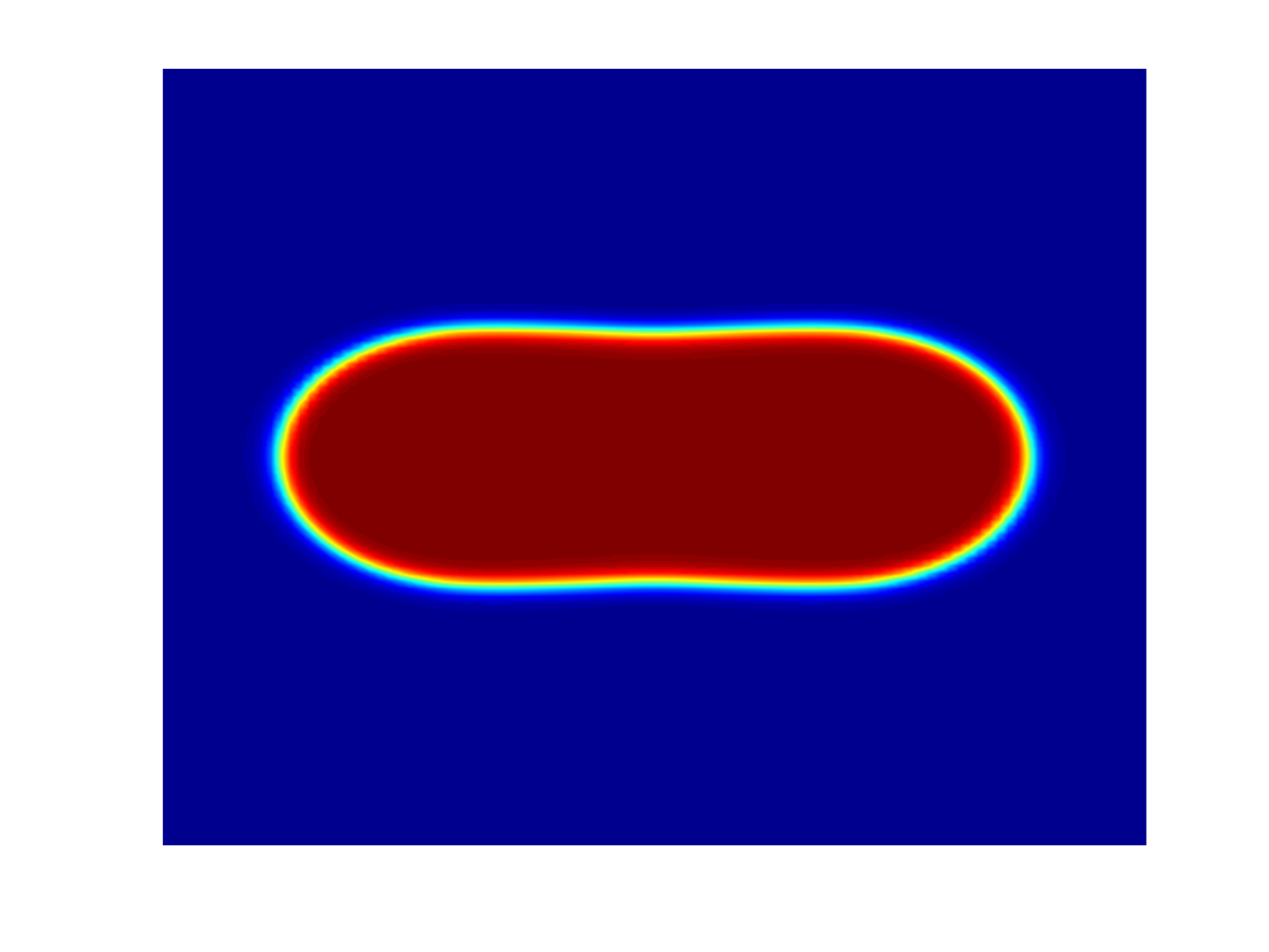}
}
\subfigure[t=5]
{
\includegraphics[width=3cm,height=3cm]{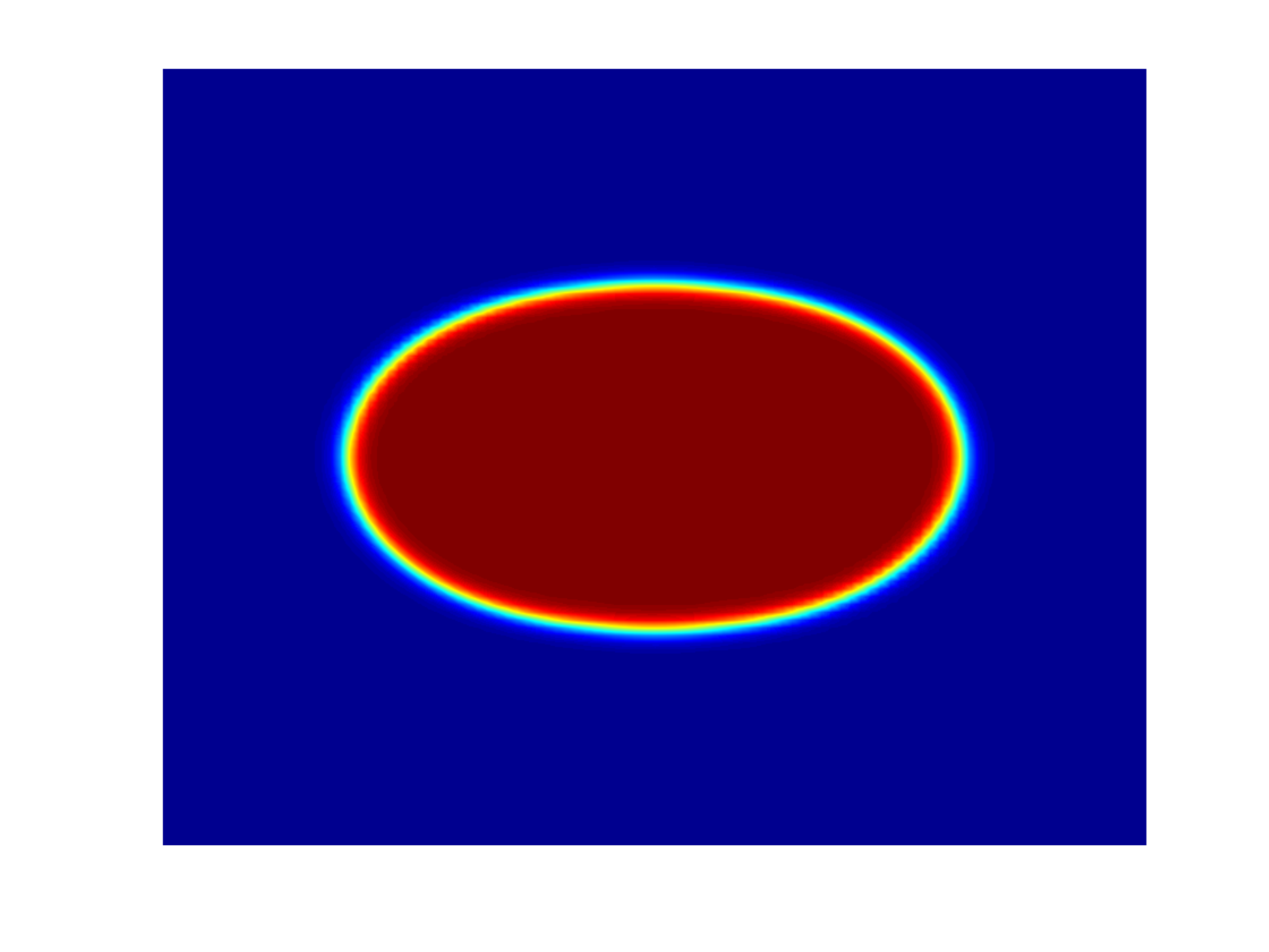}
}
\subfigure[t=10]
{
\includegraphics[width=3cm,height=3cm]{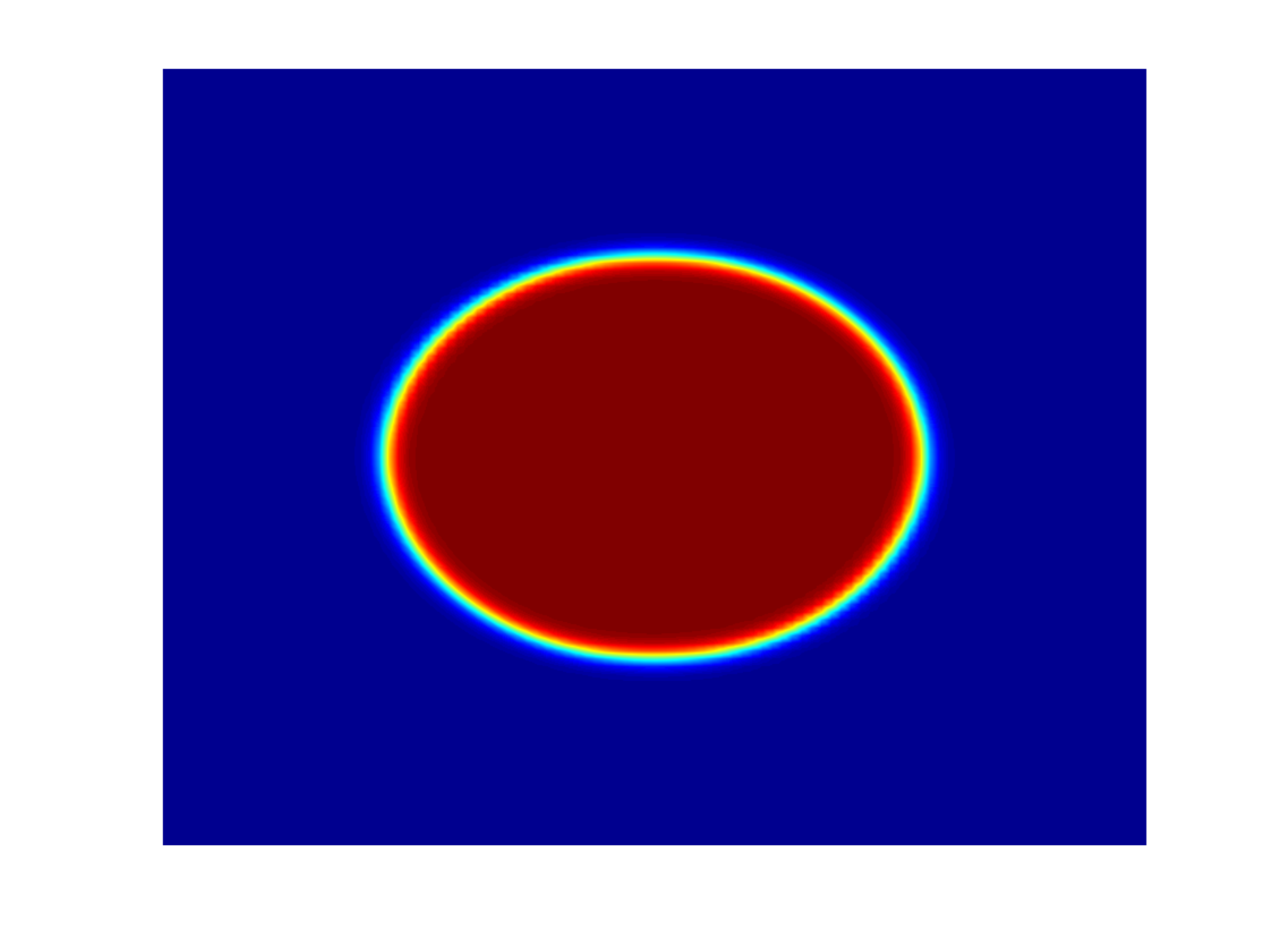}
}
\caption{Snapshots of the phase variable $\phi$ are taken at t=0, 0.05, 0.1, 1, 5, 10 for example 2 with the initial condition (\ref{section5_e2}).}\label{fig:fig1}
\end{figure}

\textbf{Example 3}: We give the following example for nonlocal Cahn-Hilliard equation with $\epsilon=0.02$, $M=1$. The initial condition is
\begin{equation}\label{section5_e3}
\aligned
&\phi_0(x,y)=0.1\times rand(x,y),
\endaligned
\end{equation}
where the $rand(x,y)$ is the random number in $[-1,1]$ with zero mean.

In Figure \ref{fig:fig2}, we perform the simulations by using the time step $\Delta t=1e-3$. The figure shows the dynamical behaviors of the phase separation for the random initial value (\ref{section5_e3}).  In Figure \ref{fig:fig2}, the snapshots of coarsening dynamics are taken at $t=0$, $0.1$, $0.5$, $1$, $2$, $10$ and the final steady shape forms several big drops.
Next, we fix $\delta=0.05$ with $h=0.01$ and decrease $\epsilon$ from $0.1$ to $0.02$. The energies are plotted in Figure \ref{fig:fig3}. One can find that the energy decay rates comply with the $-1/3$ power law quite well for all cases which is consistent to the classical local Cahn-Hilliard model.

\begin{figure}[htp]
\centering
\subfigure[t=0]{
\includegraphics[width=3cm,height=3cm]{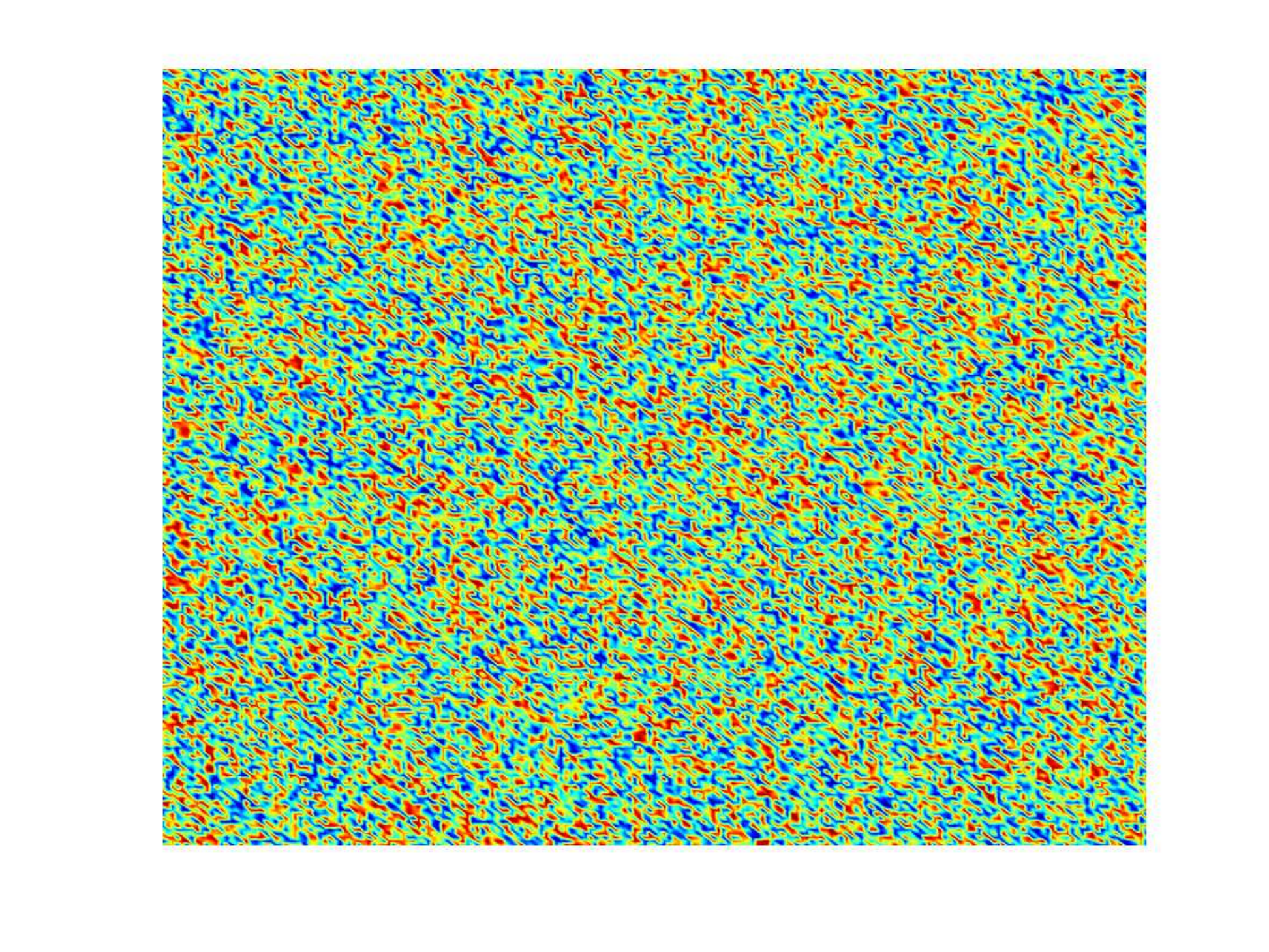}
}
\subfigure[t=0.1]
{
\includegraphics[width=3cm,height=3cm]{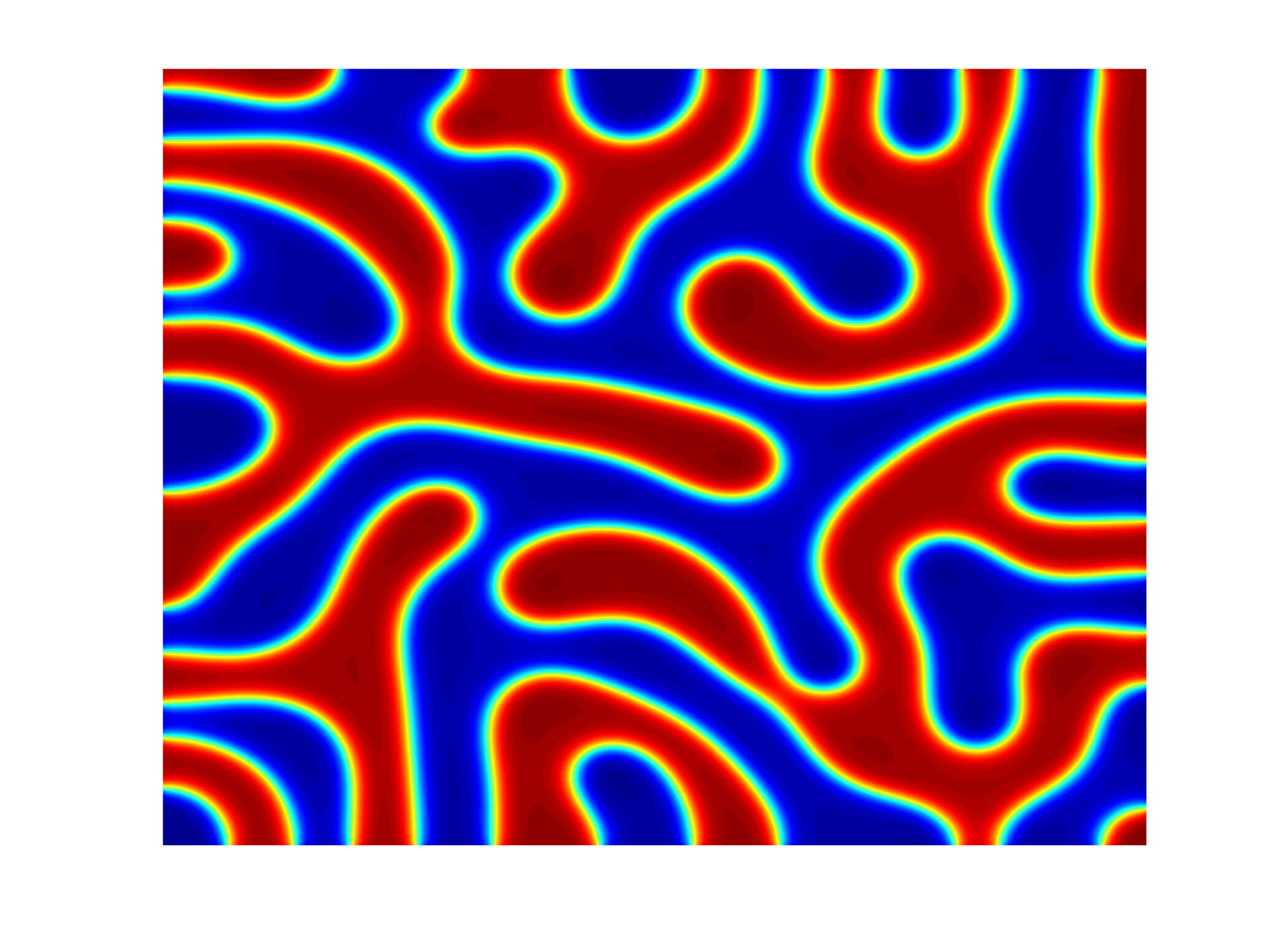}
}
\subfigure[t=0.5]
{
\includegraphics[width=3cm,height=3cm]{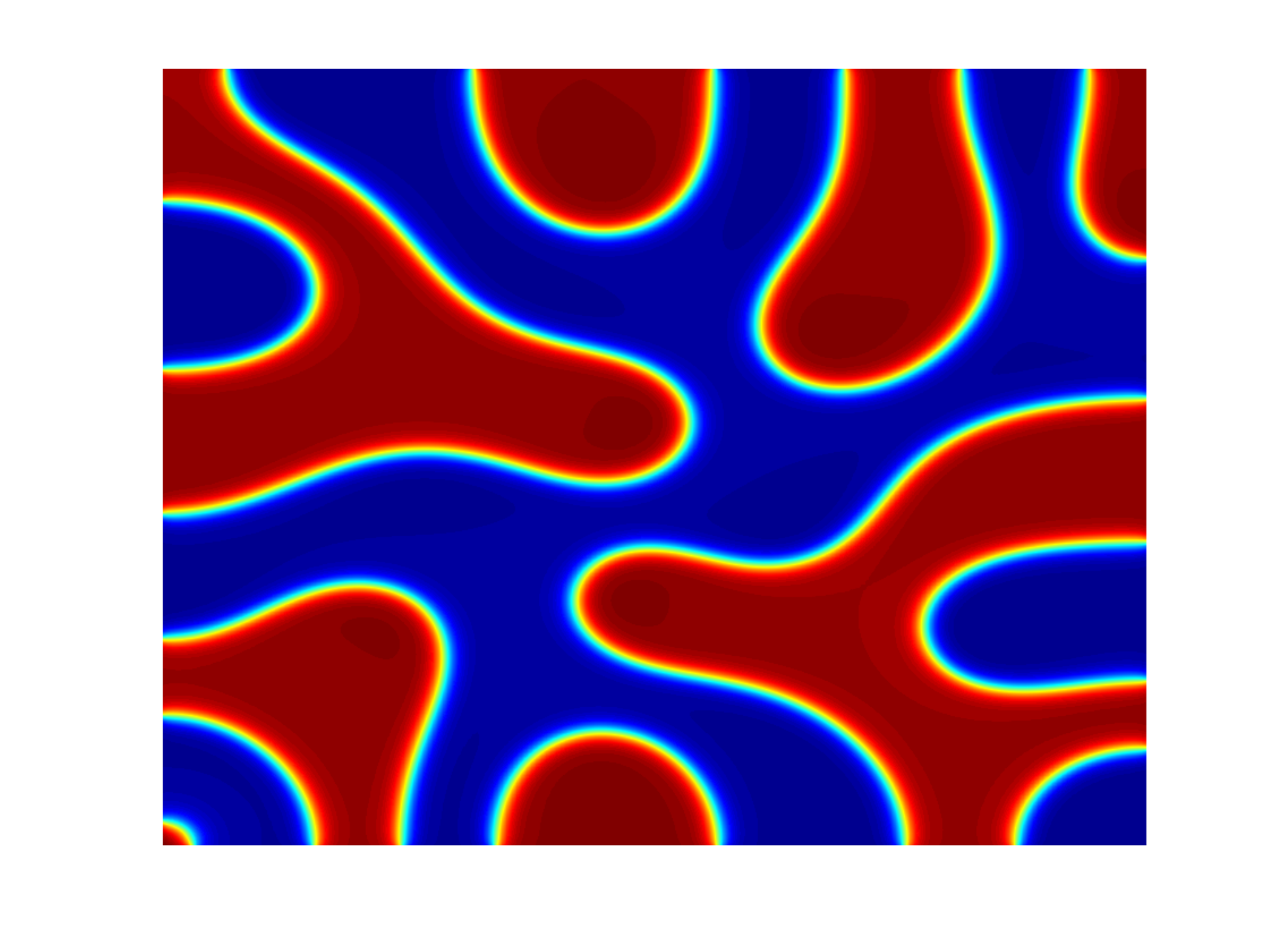}
}
\quad
\subfigure[t=1]{
\includegraphics[width=3cm,height=3cm]{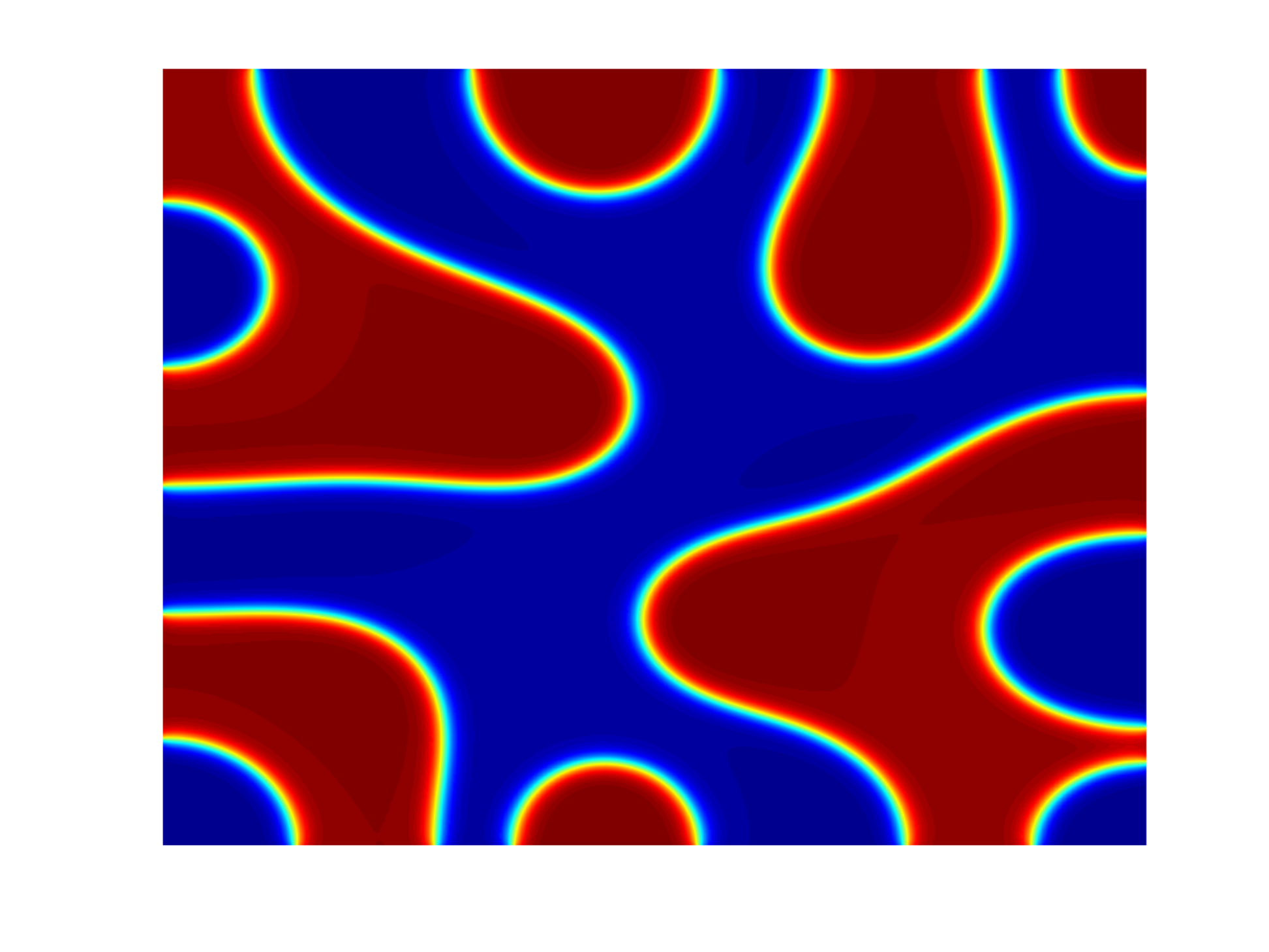}
}
\subfigure[t=2]
{
\includegraphics[width=3cm,height=3cm]{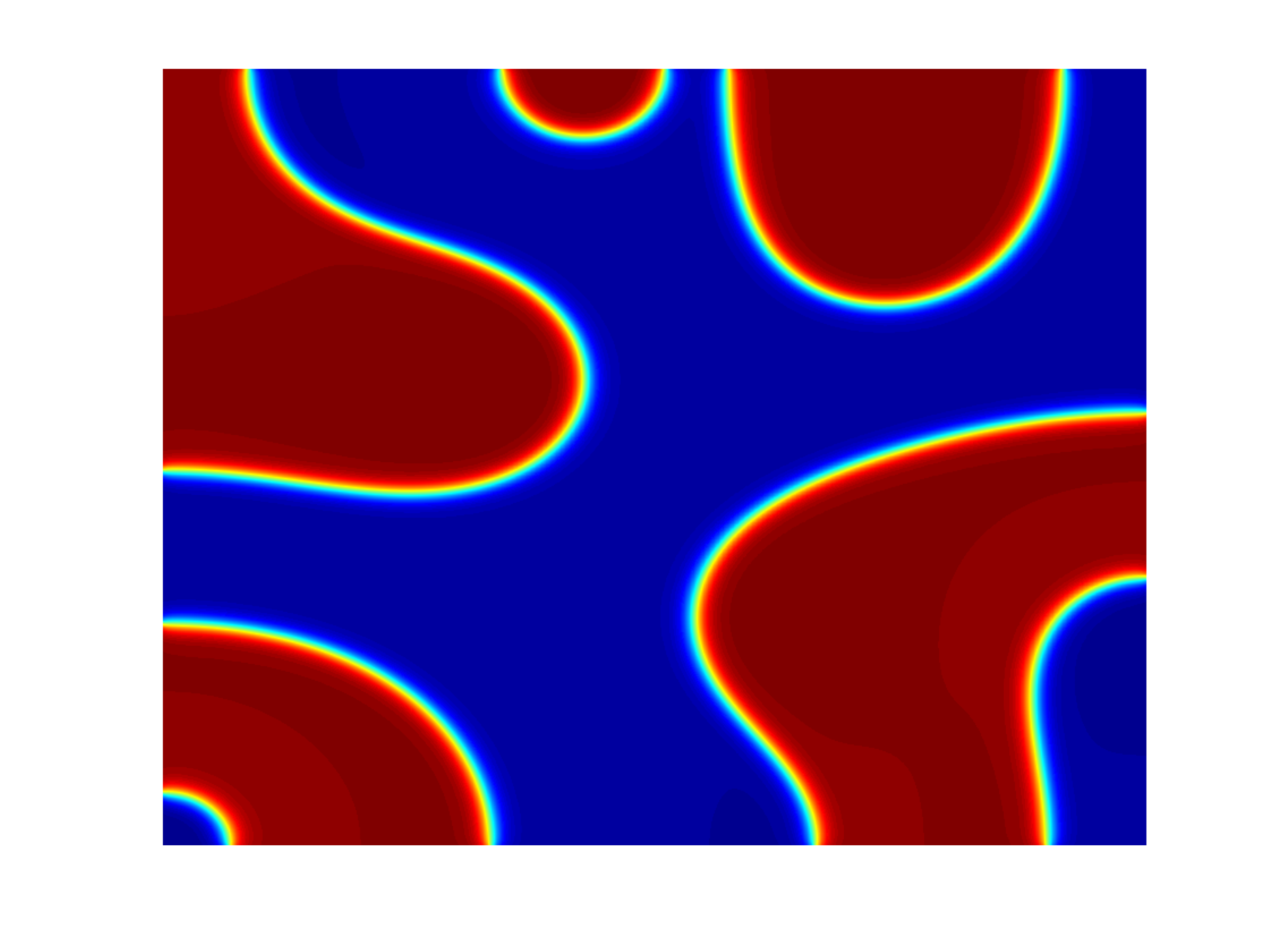}
}
\subfigure[t=10]
{
\includegraphics[width=3cm,height=3cm]{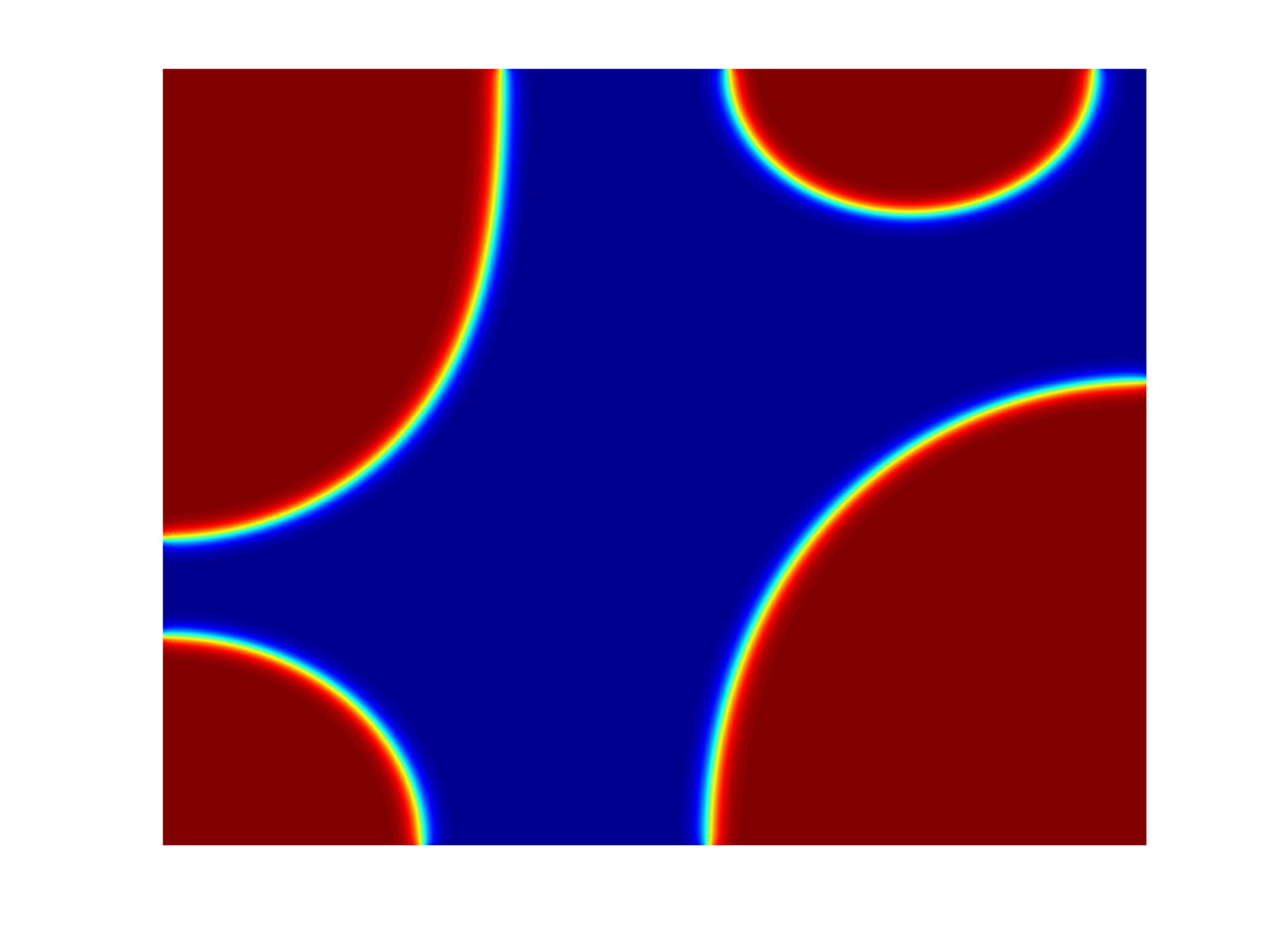}
}
\caption{Snapshots of the phase variable $\phi$ are taken at t=0, 0.1, 0.5, 1, 2, 10 for example 3 with the initial condition (\ref{section5_e3}).}\label{fig:fig2}
\end{figure}

\begin{figure}[htp]
\centering
\includegraphics[width=10cm,height=7cm]{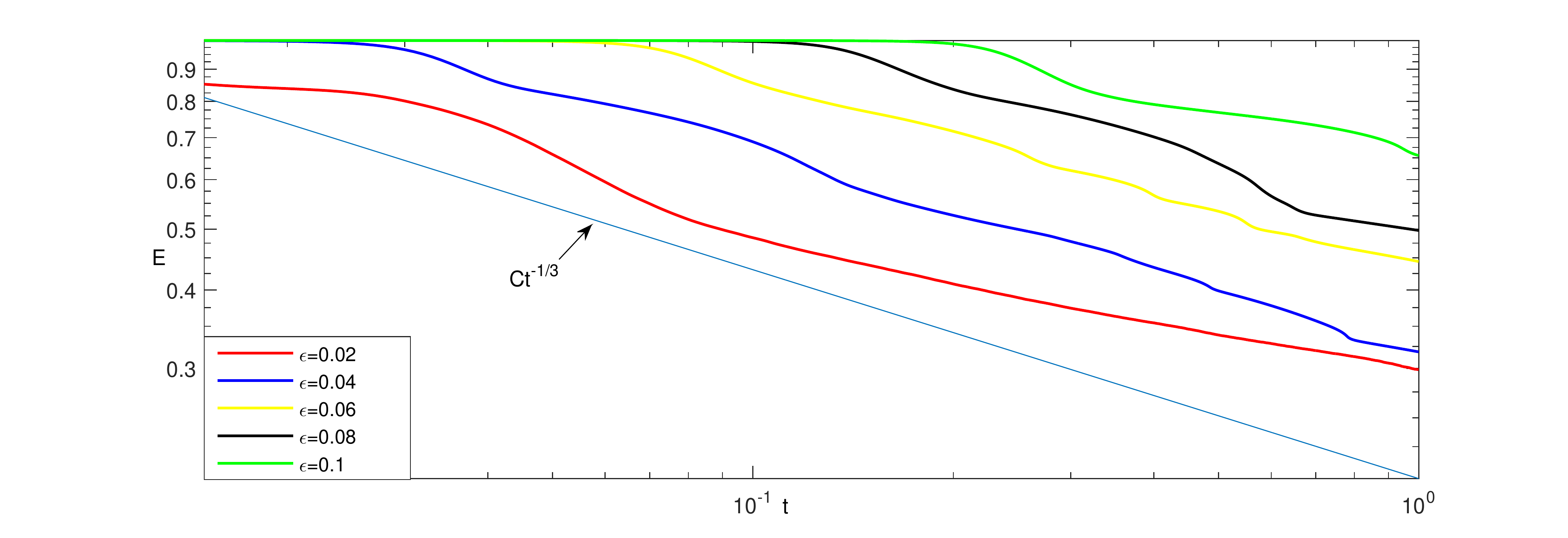}
\caption{Energy evolution of SCHEM2 with $\delta=0.05$, $M=1$ and $\epsilon=0.02$, $0.04$, $0.06$, $0.08$ and $0.1$ for example 3.}\label{fig:fig3}
\end{figure}
\section{Conclusion}
In this paper, we develop accurate and efficient linear algorithms for the general nonlocal Cahn-Hilliard equation with general nonlinear potential and prove the unconditional energy stability for its semi-discrete schemes carefully and rigorously. we construct and analyze linear, first and second order (in time) numerical scalar auxiliary variable approaches to construct unconditionally energy stable schemes. In addition, considering the huge computational work and memory requirement in solving the linear system, we analyse the structure of the stiffness matrix and seek some effective fast solution method to reduce the computational work and memory requirement. This fast solution technique is based on a fast Fourier transform and depends on the special structure of coefficient matrices. In the future work, error estimates for the fully discrete schemes will be investigated.

\bibliographystyle{siamplain}
\bibliography{SAV_nonlocal-cahn-hilliard}

\end{document}